\documentclass[12pt]{amsart}
\usepackage{latexsym, amscd, a4wide, verbatim}
\usepackage{amsmath,amssymb,amsthm,amsfonts,enumerate,array,color,lscape,fancyhdr,layout,pst-all}

\usepackage[english]{babel}
\usepackage[latin1]{inputenc}
\usepackage{young}

\setlength\parindent{0pt}

\usepackage[hcentermath]{youngtab}

\usepackage{graphicx}
\usepackage{float}
\usepackage{pstricks, mathtools}

\newcounter{numberofremark}
\setcounter{numberofremark}{0}

\newcommand\nothing[1]{}
\usepackage[active]{srcltx}
\usepackage[hyperindex,plainpages,bookmarksnumbered]{hyperref}
\usepackage[all]{xy}

\newcommand{\dcl}{\DeclareMathOperator}
\dcl\cdet{cdet} \dcl\Sp{Specm} \dcl\depth{depth} \dcl\im{Im} \dcl\Span{Span} \dcl\id{id} \dcl\Ker{Ker} \dcl\Specm{Specm}
\dcl\Supp{Supp} \dcl\codim{codim} \dcl\Y{Y} \dcl\gl{\mathfrak{gl}}    \dcl\U{U} \dcl\T{T} \dcl\soc{soc} 
\dcl\qdet{qdet} \dcl\sgn{sgn} \dcl\gr{gr} \dcl\diag{diag}
\dcl\g{\mathfrak{g}} \dcl\C{\mathbb C} \dcl\dd{{\mathrm d}}
 \dcl\Tab{Tab}

\newcommand{\mf}[1]{\mathfrak{#1}}
\newcommand{\Z}{\ensuremath{\mathbb{Z}}}
\newcommand{\h}{\ensuremath{\mathfrak{h}}}

\newlength\yStones
\newlength\xStones
\newlength\xxStones

\makeatletter
\def\Stones{\pst@object{Stones}}
\def\Stones@i#1{%
  \pst@killglue%
  \begingroup%
  \use@par%
  \setlength\xxStones{\xStones}%
  \expandafter\Stones@ii#1,,\@nil
  \endgroup
  \global\addtolength\xStones{0.6cm}%
  \global\addtolength\yStones{-7.5mm}}%
\def\Stones@ii#1,#2,#3\@nil{%
  \rput(\xxStones,\yStones){%
    \psframebox[framesep=0]{%
      \parbox[c][6mm][c]{11mm}{\makebox[11mm]{$#1$}}}}%
  \addtolength\xxStones{1.2cm}%
  \ifx\relax#2\relax\else\Stones@ii#2,#3\@nil\fi}
\makeatother

\setlength\fboxrule{0.4pt}
\def\Stone#1{\fbox{\makebox[11mm]{\strut#1}}\kern2pt}

\newtheorem{theorem}{Theorem}[section]
\newtheorem{lemma}[theorem]{Lemma}
\newtheorem{corollary}[theorem]{Corollary}
\newtheorem{proposition}[theorem]{Proposition}
\newtheorem{example}[theorem]{Example}
\newtheorem{remark}[theorem]{Remark}

\newtheorem{definition}[theorem]{Definition}
\newtheorem*{lemma*}{Lemma}

\begin{document}

\title[Highest weight modules and non-standard Gelfand-Tsetlin subalgebras]{Highest weight modules with respect to non-standard Gelfand-Tsetlin subalgebras}

\author{Juan Camilo Arias}
\address{\noindent Universidad del Rosario, Bogot\'a, Colombia} \email{juancamil.arias@urosario.edu.co}

\author{Oscar Morales}
\address{\noindent Universidade Federal do Par\'a, Bel\'em--PA , Brasil} \email{oscar@ufpa.br}

\author{Luis Enrique Ramirez}
\address{\noindent
Universidade Federal do ABC,  Santo Andr\'e--SP, Brasil} \email{luis.enrique@ufabc.edu.br}

\keywords{Gelfand-Tsetlin modules, localization functors}

\subjclass[2020]{Primary 17B10}

\maketitle

\begin{abstract}
In this paper, we study realizations of highest weight modules for the complex Lie algebra $\mathfrak{gl}_n$ with respect to non-standard Gelfand-Tsetlin subalgebras. We also provide sufficient conditions for such subalgebras to have a diagonalizable action on these realizations.
\end{abstract}

\makeatletter
\def\@roman#1{\romannumeral#1}
\makeatother

\section{Introduction}
By definition, a module of the complex Lie algebra $\mathfrak{gl}_n$ is a Gelfand-Tsetlin module if it has a generalized eigenspace decomposition over a certain maximal commutative subalgebra of the universal enveloping algebra of $\mathfrak{gl}_n$. Gelfand-Tsetlin modules generalize the classical realization of the simple finite-dimensional modules of $\mathfrak{gl}_n$ via the so-called Gelfand-Tsetlin tableaux introduced in \cite{GT50}.\\ 

The category of Gelfand-Tsetlin modules plays an important role in many areas of mathematics and theoretical physics, and have been studied extensively since its origins \cite{DFO94}. Recently, it has attracted considerable attention after the introduction of the theories of singular Gelfand-Tsetlin modules \cite{FGR16}, and relation Gelfand-Tsetlin modules \cite{FRZ19}.  On one hand, the construction of singular Gelfand-Tsetlin modules deal with a generalization of the classical Gelfand-Tsetlin formulas in order to construct modules with a tableau-type realization, and the explicit constructions includes a large class of modules for which the action of the Gelfand-Tsetlin subalgebra is not diagonalizable. Several important results in this direction were obtained in \cite{EMV20, FGR17a, FGR17b, FGR21, FGRZ20a, FGRZ20b, RZ18, Vis18, Web24}, among others. On the other hand, the construction of relation Gelfand-Tsetlin modules deals with conditions to avoid singularities, and unify several known constructions of Gelfand-Tsetlin modules, including finite dimensional modules \cite{GT50}, generic modules \cite{DFO94}, and some families of modules constructed by relaxing the conditions on the Gelfand-Tsetlin construction of finite dimensional modules (see for instance \cite{GG65}, \cite{LP79}, \cite{Maz98}, \cite{Maz03}). This class of modules has as main advantage the combinatorial nature of the construction and explicitness of the action, which is given by the classical Gelfand-Tsetlin formulas.\\

Despite the category of Gelfand-Tsetlin modules is attached to a maximal commutative subalgebra, early results in \cite{DFO91, DFO94} stated that the categories of modules over any two  Gelfand-Tsetlin subalgebras are equivalent. Because of this outcome, most of the known results and constructions were obtained for modules over the standard Gelfand-Tsetlin subalgebras. In this paper we study realizations of highest weight modules as twisted relation Gelfand-Tsetlin modules, in particular we provide structural results for the module with respect to non-standard Gelfand-Tsetlin subalgebra. As an application, we use this approach and localization functors to construct several classes of modules which are not necessarily highest weight modules and can be described by a twisted action of the classical Gelfand-Tsetlin formulas.\\

\noindent
Let us briefly summarize the content of this paper. In section \ref{S:Preliminaries}, we set basic notations on Lie algebras and root datum. In section \ref{S:Relation GT modules} we summarize the concepts and known results about relation Gelfand-Tsetlin modules and relation graphs. In section \ref{S:Action on relation graphs}, using an action of the symmetric group on a convenient set of graphs, we explicitly construct highest weight vectors for the realizations of a fixed highest weight module as a twisted relation module. Finally, section \ref{S:Main results and applications} contains the main results of the paper, we study a special class of weights, called relation weights, which provide well behaved Gelfand-Tsetlin modules with respect to non-standard Gelfand-Tsetlin subalgebras. We construct several families of relation weights, and use localization functors to construct some families of relation modules that are not highest weight modules.

\medskip
\noindent{\bf Acknowledgements.} O.M. was partially supported by CMUP, member of LASI, which is financed by national funds through FCT -- Funda\c c\~ao para a Ci\^encia e a Tecnologia, I.P., under the projects with references UIDB/00144/2020 and UIDP/00144/2020. L.E.R. was partially supported by CNPq grant 316163/2021-0, and Fapesp grant 2022/08948-2.

\section{Preliminaries}\label{S:Preliminaries}

 In this section, we fix some basic notations and definitions regarding Lie algebras and partitions of root systems.

\subsection{Root datum } Let us fix $n\geq 2$, and consider the reductive Lie algebra $\mf{gl}_n$ of $n\times n$ matrices over the complex numbers. We denote by $\mf{sl}_n$ the simple Lie subalgebra of traceless matrices in $\mf{gl}_n$. By $E_{ij}$ we denote the matrix in $\mf{gl}_n$ with $1$ in the $(i,j)$-th position and zero elsewhere. It is known that $\mf{gl}_n = \mf{sl}_n \oplus \mf{s}_n$, where $\mf{s}_n$ is the Lie algebra of scalar matrices of size $n\times n$, and that $\mf{sl}_n$ has a triangular decomposition $\mf{sl}_n \cong \mf{n}_{-}\oplus \h \oplus \mf{n}_{+}$. Here, $\mf{h}$ is the (standard) Cartan subalgebra of $\mf{sl}_n$ which is the abelian Lie algebra generated by the matrices $E_{i,i} - E_{i+1,i+1}$, where $i=1, \ldots, n-1$, and $\mf{n}_{+},\ \mf{n}_{-}$ are the positive and negative nilpotent Lie subalgebras of $\mf{sl}_n$ generated by $\{E_{ij}\}_{1\leq i< j\leq n}$ and $\{E_{ji}\}_{1\leq i< j\leq n}$ respectively. We denote by $\mf{b} = \h \oplus\mf{n}_{+}$ the standard Borel subalgebra of $\mf{sl}_n.$\\

Let $\Delta$ be the standard root system of $\mf{gl}_n$ (and $\mf{sl}_n$) with set of simple roots given by $\Pi=\{\alpha_i = \varepsilon_i - \varepsilon_{i+1}\ |\ i=1, \ldots, n-1\}$. For $1\leq i < j \leq n$ we write $\alpha_{ij} = \varepsilon_i - \varepsilon_{j}$, and $\alpha_{ij} := -\alpha_{ji}$ if $i>j$ (note that $\alpha_{i,i+1} = \alpha_i$).  $\Delta^{+} = \{\alpha_{ij}\ |\ 1\leq i < j \leq n \}$ denotes the set of positive roots, and $\Delta^{-} = -\Delta^{+}$ denotes the set of negative roots.  We denote by $W$ the Weyl group of $\mf{gl}_n$ and $\mf{sl}_n$. It is isomorphic to the symmetric group $S_n$ and it is generated by the simple reflections $s_k := s_{\alpha_k}$ that act on the set of roots by permuting $\varepsilon_k$ and $\varepsilon_{k+1}$. Note that for $\sigma\in S_n$, $\sigma(\alpha_{ij}) =\alpha_{\sigma(i) \sigma(j)}$, and we have that $\sigma(\alpha_{ij})$ is a positive root if $\sigma(i) < \sigma (j)$, and it is a negative root if $\sigma(i) > \sigma (j)$.\\

Denote by $P \subset \h^*$ the integral weight lattice of $\mf{sl}_n$, and let $Q \subset P$ be the integral root lattice. It is known that $P$ is a free abelian group of rank $n-1$ generated by the fundamental weights $\varpi_1, \ldots, \varpi_{n-1}$, determined by  the relations $\langle \varpi_i, \alpha_j^{\vee} \rangle = \delta_{ij}$, where $\alpha_j^{\vee}$ is the coroot associated with $\alpha_j$ and $\langle - , - \rangle$ is the Cartan-Killing form. Set $\rho$ to be the half sum of the positive roots, and let $P^{+} = \{ \lambda \in \h^*\ |\ \langle \lambda + \rho, \alpha_i^{\vee} \rangle \in \Z_{> 0},\ i=1, \ldots, n-1\}$ denote the set of dominant integral weights. Due to the natural identification of $\h^*$ with $\C^{n-1}$, we will identify $P$ with $\Z^{n-1}$ and $P^{+}$ with $\Z^{n-1}_{\geq 0}$ where $\lambda \in P$ corresponds with the vector $(\lambda_1, \ldots, \lambda_{n-1})$ for $\lambda_i = \langle \lambda, \varpi_i \rangle$. The usual action of $W$ on $\h^*$ is given by $s_i(\lambda) := \lambda - \langle \lambda , \alpha_i^{\vee} \rangle \alpha_i$ and the dot action $s_i\cdot \lambda := s_i(\lambda + \rho) - \rho = \lambda - \langle \lambda + \rho, \alpha_i^{\vee}\rangle\alpha_i$.

\subsection{Partitions and quase-partitions of \texorpdfstring{$\Delta$}{Delta} }

Let $S$ be a subset of $\Delta$. We say that $S$ is \emph{closed} if for any $\alpha, \beta \in S$ with $\alpha + \beta \in \Delta$ then $\alpha + \beta \in S$, $S$ is called a \emph{closed partition} if it is closed, $\Delta = S \cup (-S) $, and $S\cap (-S) = \emptyset$. The standard closed partition of $\Delta$ is given by the set of positive roots $\Delta^{+}$, and it is known that for any semisimple Lie algebra, all the closed partitions are conjugate to $\Delta^{+}$ by the usual action of the Weyl group. Moreover, for a partition to be closed, it is equivalent to require that it is conjugate to the set of positive roots.\\

A subset $\mathcal{Q} \subset \Delta$ is called a \textit{quase-partition} of $\Delta$ if  $|\Delta|=2 |\mathcal{Q}|$, and $\alpha \in \mathcal{Q}$ if and only if $-\alpha \notin \mathcal{Q}$. Clearly, any closed partition is a quase-partition of $\Delta$. On the other hand, a quase-partition is a closed partition if and only if it is closed, which, in turn, is equivalent to the existence of $\sigma \in W$ such that $\mathcal{Q}=\sigma(\Delta^+)$. Moreover, for any quase-partition $\mathcal{Q}$ of $\Delta$ we have  $\mathfrak{h}=\operatorname{span}_{\mathbb C} \{ h_\alpha \ | \ \alpha \in \mathcal{Q} \} $.

\section{Relation Gelfand-Tsetlin modules}\label{S:Relation GT modules}

In this section, we collect the known facts that we will need about Gelfand-Tsetlin modules with respect to the standard Gelfand-Tsetlin subalgebra.\\

Associated with any $\sigma\in S_n$ and $k\leq n$, denote by $B_k$ the subalgebra of $\mathfrak{gl}_n$ generated by $\{ E_{\sigma(i)\sigma(j)}\,|\, i,j=1,\ldots,k \}$. Note that $B_1\subset B_2\subset \ldots \subset B_n$, and $B_{k}\simeq \mathfrak{gl}_k$ for any $1\leq k\leq n$. Such a chain of subalgebras induces a chain  $U_1\subset U_2\subset\ldots\subset U_n$ for the universal enveloping algebras $U_{k}=U(B_k)$, $1\leq k\leq n$. Let $Z_{k}$ be the center of $U_{k}$, and denote by $\Gamma_{\sigma}$ the subalgebra of $U(\mf{gl}_n)$ generated by $\bigcup\limits_{k=1}^n
Z_k$. Any subalgebra of the form $\Gamma_{\sigma}$ for some $\sigma$ in $S_n$ is called \emph{Gelfand-Tsetlin subalgebra} of type $A$, we also refer to $\Gamma:=\Gamma_{id}$ as the \emph{standard Gelfand-Tsetlin subalgebra} of $U(\mf{gl}_n)$.

\begin{definition} Let $\widetilde{\Gamma}$ be any Gelfand-Tsetlin subalgebra of type $A$. A finitely generated $U(\mf{gl}_n)$-module
$M$ is called a $\widetilde{\Gamma}$-\emph{Gelfand-Tsetlin module} if 
\begin{equation}
M=\bigoplus_{\chi\in\widetilde{\Gamma}^{*}}M(\chi)
\end{equation}
where $
M(\chi)=\{v\in M\ |\ \forall g\in\widetilde{\Gamma}\text{, there exists } k\in\mathbb{Z}_{>0} \text{ such that } (g-\chi(g))^{k}v=0 \}.$
\end{definition}

\subsection{Relation modules}\label{S: Rel Modules} 
 In \cite{FRZ19} the class of relation Gelfand-Tsetlin modules was introduced as an attempt to unify several known constructions of Gelfand-Tsetlin modules with diagonalizable action of the standard Gelfand-Tsetlin subalgebra. This section is devoted to recall the construction and main properties of relation Gelfand-Tsetlin modules with respect to the standard Gelfand-Tsetlin subalgebra.\\

Denote by $\mathfrak{V}$ the set $\{(i,j)\ |\ 1\leq j\leq i\leq n\}$ arranged in a triangular configuration with $n$ rows, where the $k$-th row is written as $((k,1),\ldots,(k,k))$, and the top row is given by the $n$-th row. From now on, we only consider directed graphs $G$ with set of vertices $\mathfrak{V}$, such that the 
only possible arrows are those connecting vertices in consecutive rows or vertices in the $n$-th row. We will assume that $G$ does not contain oriented cycles or multiple arrows. 

\begin{definition}\label{Def: critical} A graph $G$ will be called \emph{non-critical} if for any two vertices in the same row $k<n$ and the same connected component (of the unoriented graph associated with $G$), there exists an oriented path in $G$ from one vertex to the other. \end{definition}

\begin{example} Consider the following two graphs for $n=4$: 

\begin{tabular}{c c}
\xymatrixrowsep{0.5cm}
\xymatrixcolsep{0.1cm}\xymatrix @C=0.1em { 
& \scriptstyle{(4,1)}\ar[rd]& &\scriptstyle{(4,2)}\ar[dl] \ar[rd]   & &\scriptstyle{(4,3)}\ar[rd] \ar[rr] &   &\scriptstyle{(4,4)} \\
 & & \scriptstyle{(3,1)}  \ar[rd]   & &\scriptstyle{(3,2)}\ar[rd]  & &\scriptstyle{(3,3)}\ar[dl]  \\
&    &     &\scriptstyle{(2,1)} \ar[rd] & &\scriptstyle{(2,2)}  & \\
&  &    & &\scriptstyle{(1,1)} & &\\
}\\

\end{tabular}   
\begin{tabular}{c c}
\xymatrixrowsep{0.5cm}
\xymatrixcolsep{0.1cm}\xymatrix @C=0.1em { 
& \scriptstyle{(4,1)}\ar[rd]& &\scriptstyle{(4,2)}\ar[dl] \ar[rd]   & &\scriptstyle{(4,3)}\ar[rd] &   &\scriptstyle{(4,4)} \\
 & & \scriptstyle{(3,1)}  \ar[rd]   & &\scriptstyle{(3,2)}\ar[rd] & &\scriptstyle{(3,3)}\ar[ru]  \\
&    &     &\scriptstyle{(2,1)}\ar[ru] \ar[rd] & &\scriptstyle{(2,2)}\ar[ru]  & \\
&  &    & &\scriptstyle{(1,1)}\ar[ru] & &\\
}\\

\end{tabular}   \\
The graph to the left is critical, and the one to the right is non-critical.
\end{example}

We write vectors in $\mathbb{C}^{\frac{n(n+1)}{2}}$ as ordered tuples $L=(l_{n1},\ldots,l_{nn},\ldots,l_{22},l_{21},l_{11})$ indexed by elements in $\mathfrak{V}$, and by $T(L)$ we denote the triangular configuration induced by the triangular configuration of $\mathfrak{V}$. Such arrays will be called \emph{Gelfand-Tsetlin tableaux}.

\begin{definition} Let $G$ be any graph and $T(L)$ any Gelfand-Tsetlin tableau.

\begin{itemize}
\item[(i)] We say that \emph{$T(L)$ satisfies $G$} if
\begin{itemize}
\item[(a)] $l_{ij}-l_{rs}\in \mathbb{Z}_{\geq 0}$ whenever $(i,j)$ and $(r,s)$ are connected by a horizontal arrow or an arrow pointing down.
\item[(b)] $l_{ij}-l_{rs}\in \mathbb{Z}_{> 0}$ whenever $(i,j)$ and $(r,s)$ are connected by an arrow pointing up.
\end{itemize}
\item[(ii)] We say that \emph{$T(L)$ is a $G$-realization} if
\begin{itemize}
\item[(a)] $T(L)$ satisfies $G$.
\item[(b)]  For any $1\leq k\leq n-1$, we have $l_{ki}-l_{kj}\in \mathbb{Z} $ only if $(k,i)$ and $(k,j)$ are in the same connected component of the unoriented graph associated with $G$. 
\end{itemize}

\item[(iii)] If $T(L)$ is a $G$-realization, by ${\mathcal B}_{G}(T(L))$ we denote the set of all $G$-realizations of the form $T(L+z)$, with $z\in {\mathbb Z}^\frac{n(n+1)}{2}$ such that $z_{ni}=0$ for  $1\leq i\leq n$. By $V_{G}(T(L))$ we denote the complex vector space spanned by ${\mathcal B}_{G}(T(L))$.
\end{itemize}
\end{definition}

\begin{definition}
A non-critical graph $G$ is called \emph{relation graph} if for any $G$-realization $T(L)$, the vector space $V_{G}(T(L))$ has a structure of $\mathfrak{gl}_n$-module, endowed with the action of $\mathfrak{gl}_n$ given by the Gelfand-Tsetlin formulas.
 \begin{equation}\label{GT-formulas}
\begin{split}
E_{k,k+1}(T(L))&=-\sum_{i=1}^{k}\left(\frac{\prod_{j=1}^{k+1}(l_{ki}-l_{k+1,j})}{\prod_{j\neq i}^{k}(l_{ki}-l_{kj})}\right)T(L+\delta^{ki}),\\ E_{k+1,k}(T(L))&=\sum_{i=1}^{k}\left(\frac{\prod_{j=1}^{k-1}(l_{ki}-l_{k-1,j})}{\prod_{j\neq i}^{k}(l_{ki}-l_{kj})}\right)T(L-\delta^{ki}),\\
E_{kk}(T(L))&=\left(k-1+\sum_{i=1}^{k}l_{ki}-\sum_{i=1}^{k-1}l_{k-1,i}\right)T(L),
\end{split}
\end{equation}
where $T(L\pm\delta^{ki})$ denotes the Gelfand-Tsetlin tableau obtained from $T(L)$ by adding $\pm 1$ to the $(k,i)$-th entry of $T(L)$. By definition, whenever the new tableau $T(L\pm\delta^{ki})$ is not a $G$-realization, the corresponding summand of $E_{k,k+1}(T(L))$ or $E_{k+1,k}(T(L))$ is zero. Modules isomorphic to $V_{G}(T(L))$ for some relation graph $G$ will be called \emph{relation modules}.
 \end{definition}

Recall that, for $X = (x_1, \ldots, x_m)\in \mathbb{C}^{m}$ and $\sigma\in S_m$, we denote by $\sigma(X)$ the vector $(x_{\sigma^{-1}(1)}, \ldots, x_{\sigma^{-1}(m)})$.

\begin{remark}\label{general formulas}
Using the action of the generators, it is possible to describe explicitly the action of $E_{\ell m}$ with $m\neq \ell$ for any relation module (see \cite[Proposition 3.13]{FGR16}). Indeed, let $\widetilde{S}_t$ denote the subset of $S_t$ consisting of the transpositions $(1,i)$, $i=1,...,t$ for $\ell < m $, set $\Phi_{\ell m} =  \widetilde{S}_{m-1} \times\cdots\times \widetilde{S}_{\ell}$, and for $\ell > m$ set $\Phi_{\ell m} = \Phi_{m \ell}$. Then
$$E_{\ell m} (T(L))= \sum_{\sigma \in \Phi_{\ell m}} e_{\ell m} (\sigma (L)) T(L+\sigma(\varepsilon_{\ell m})),$$
where $e_{\ell m}(L)$ is a rational function on the entries of the tableau $T(L)$, for $\ell<m$, $\varepsilon_{\ell m}:=
\delta^{\ell,1}+\delta^{\ell+1,1}+\ldots+\delta^{m-1,1}$, and $\varepsilon_{m\ell}=-\varepsilon_{\ell m}$, and whenever the tableau $T(L+\sigma(\varepsilon_{\ell m}))$ is not a $G$-realization, the corresponding summand of $E_{\ell m} (T(L))$ is zero by definition.

 \end{remark}

\begin{definition} Let $G$ be any graph.
\begin{itemize}
    \item[(i)] $G$ will be called \emph{ordered} if for any $1\leq k\leq n$ and $1\leq i<j\leq k$ there are no directed paths from $(k,j)$ to $(k,i)$. 
    \item[(ii)] For an ordered graph $G$, and $1\leq i<j\leq k\leq n-1$, we call $((k,i),(k,j))$ an \emph{adjoining pair in $G$} if there is a directed path from $(k,i)$ to $(k,j)$ and  whenever $ i<t< j$, there are no directed paths from $(k,i)$ to $(k,t)$ or from $(k,t)$ to $(k,j)$. 
    \item[(iii)] An ordered graph $G$ \emph{ has crosses} if it contains a subgraph of the form:
\begin{center}
\begin{tabular}{c c }
\xymatrixrowsep{0.5cm}
\xymatrixcolsep{0.1cm}
\xymatrix @C=0.2em{
 \scriptstyle{(k+1,r)}\ar[rrrd] &   & &  \scriptstyle{(k+1,s)}\ar[llld] |!{[d];[lll]}\hole    & \\
 \scriptstyle{(k,i)}& &    &\scriptstyle{(k,j)}   & }
\end{tabular}
\end{center}
 with $1\leq r<s\leq k+1$, and $1\leq i< j\leq k$.

 \item[(iv)] An ordered graph $G$ satisfies the $\Diamond$-condition if for every adjoining pair of vertices $((k,i),(k,j))$, one of the following is a subgraph of $G$: 
\begin{center}
\begin{tabular}{c c c c}
\xymatrixrowsep{0.5cm}
\xymatrixcolsep{0.1cm}
\xymatrix @C=0.2em{
  &   &\scriptstyle{(k+1,p)}\ar[rd]   &   & \\
 \scriptstyle{G_1}=  &\scriptstyle{(k,i)}\ar[rd] \ar[ru]  &    &\scriptstyle{(k,j)};   &  \\
   &   &\scriptstyle{(k-1,q)}\ar[ru]   &   & }
&\ \ &
\xymatrixrowsep{0.5cm}
\xymatrixcolsep{0.1cm}\xymatrix @C=0.2em {
   &   &\scriptstyle{(k+1,s)}    &   &\scriptstyle{(k+1,t)}\ar[rd]&& \\
  \scriptstyle{G_{2}}= &\scriptstyle{(k,i)} \ar[ru]  & &   & & \scriptstyle{(k,j)} \\
   &   &   &   & &&}
\end{tabular}
\end{center}

for some $1\leq q\leq k-1$, $1\leq p\leq k+1$, or $1\leq s<t\leq k+1.$
    
\end{itemize}
\end{definition}

We have the following characterization of relation graphs.

\begin{theorem}\label{relation mod} An ordered, non-critical, cross-less graph $G$ is a relation graph if and only if every connected component of $G$ satisfies the $\Diamond$-condition.
\end{theorem}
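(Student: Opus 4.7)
The plan is to prove both directions by tracking when the Gelfand--Tsetlin formulas \eqref{Gelfand-Tsetlin formulas} are well-defined on $V_G(T(L))$ and when they satisfy the $\mathfrak{gl}_n$ commutation relations. The potential pathologies in \eqref{Gelfand-Tsetlin formulas} come from two sources: (a) a zero denominator $l_{ki}-l_{kj}=0$ for $i\ne j$ in the same row (which can only occur if $(k,i)$ and $(k,j)$ lie in the same connected component, by the $G$-realization definition), and (b) the target $T(L\pm\delta^{ki})$ leaving the set $\mathcal{B}_G(T(L))$. Both phenomena are controlled by adjoining pairs, which is why the $\Diamond$-condition is phrased in terms of them.

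For the \textbf{if} direction, I would first reduce the well-definedness check to adjoining pairs: if $l_{ki}=l_{kj}$ for some $i<j$, then pick a shortest directed path from $(k,i)$ to $(k,j)$, so the problem localises to an adjoining pair $((k,i'),(k,j'))$ along the path, where equality also forces equality at the intermediate row values. Now invoke $\Diamond$. If configuration $G_1$ occurs, the arrows $(k,i)\to(k+1,p)$ (up, strict) and $(k-1,q)\to(k,j)$ (up, strict) together with the non-strict inequalities from the two down arrows force $l_{ki}>l_{k-1,q}\ge l_{k-1,q}>l_{kj}$, so equality cannot happen and no singularity arises. If configuration $G_2$ occurs, the up arrow $(k,i)\to(k+1,s)$ forces $l_{ki}>l_{k+1,s}$ and the down arrow $(k+1,t)\to(k,j)$ forces $l_{k+1,t}\ge l_{kj}$; I would then argue that when $l_{ki}=l_{kj}$, the tableau $T(L+\delta^{kj})$ violates the arrow $(k+1,t)\to(k,j)$ (or, dually, $T(L-\delta^{ki})$ violates $(k,i)\to(k+1,s)$), so the dangerous summand is zero by convention, while the remaining singularity in the other index cancels against a matching zero in the numerator coming from $l_{k+1,t}=l_{ki}$ or $l_{k+1,s}=l_{ki}$. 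The cross-less hypothesis is what guarantees these are the only configurations to consider, and the ordered hypothesis ensures the adjoining pair reduction makes sense.

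To verify the commutation relations, I would argue as in \cite{FGR16,FRZ19}: the GT formulas are rational in the entries, and for generic $L$ (all row differences non-integer) they define a $\mathfrak{gl}_n$-action on the vector space spanned by all integer shifts. The action on $V_G(T(L))$ is obtained as a specialisation/restriction, and the key point is that the subspace $V_G(T(L))\subset V_{\mathrm{generic}}$ is closed under the action thanks exactly to the vanishing/cancellation analysis above; once closure is established, the commutation relations descend from the generic case by continuity.

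For the \textbf{only if} direction, I would argue by contrapositive: assume some adjoining pair $((k,i),(k,j))$ in a component of $G$ fails the $\Diamond$-condition. Then build an explicit $T(L)\in\mathcal{B}_G(T(L))$ with $l_{ki}=l_{kj}$ and check that in the formula for $E_{k,k+1}(T(L))$ (or $E_{k+1,k}$), the singular summand at index $i$ is not cancelled by any numerator zero (because neither $G_1$ nor $G_2$ holds) while the corresponding shifted tableau $T(L+\delta^{ki})$ remains a $G$-realization (so the summand is not cancelled by the boundary convention either). This yields a genuine pole, so the claimed $\mathfrak{gl}_n$-action does not exist. The main obstacle, and the technically delicate part of the whole argument, is the bookkeeping in the $G_2$ case of the ``if'' direction: one must simultaneously track which shifted tableau escapes $\mathcal{B}_G(T(L))$ and which factor in the numerator vanishes, and verify that these match up across every way an adjoining pair can be embedded in a larger non-critical graph.
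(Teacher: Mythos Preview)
The paper does not prove this theorem here: its entire proof reads ``See \cite[Theorem~4.33]{FRZ19}.'' There is thus nothing in the present paper to compare your proposal against; the result is imported wholesale from the earlier reference.

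Your sketch does follow the strategy one expects: localise the possible singularities of \eqref{Gelfand-Tsetlin formulas} to adjoining pairs and show the $\Diamond$-condition resolves them. Two remarks on the details. First, your $G_1$ chain contains the visible slip $l_{ki}>l_{k-1,q}\ge l_{k-1,q}>l_{kj}$; the correct inequalities are $l_{ki}\ge l_{k-1,q}>l_{kj}$ (via the lower path) or $l_{ki}>l_{k+1,p}\ge l_{kj}$ (via the upper), and either one already forbids $l_{ki}=l_{kj}$. Second, your $G_2$ argument has a genuine gap: you assert that the residual pole ``cancels against a matching zero in the numerator coming from $l_{k+1,t}=l_{ki}$ or $l_{k+1,s}=l_{ki}$,'' but the arrows in $G_2$ only give $l_{k+1,t}\ge l_{kj}$ and $l_{ki}>l_{k+1,s}$, which do not force either equality. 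What actually has to be shown is that when $l_{ki}=l_{kj}$ the two singular summands (with indices $i$ and $j$) either both drop by the boundary convention or have residues that cancel pairwise; making this precise requires following the full directed path realising the adjoining pair, not just the two arrows exhibited in $G_2$. Likewise, ``continuity from the generic case'' is not literally available: $V_G(T(L))$ is not a specialisation of a generic module but a restriction to a proper subset of tableaux, so once well-definedness is secured the commutation relations must be verified as identities on that subset, including the boundary contributions from summands set to zero by convention.
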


\begin{proof}
See \cite[Theorem 4.33]{FRZ19}.
\end{proof}

 For any directed graph $G$ with finite vertices and without cycles, there is a unique graph with the same set of vertices and the same reachability relations as the original graph, with a minimal set of arrows. Such graph will be called the transitive reduction of $G$ and will be denoted by $\bar{G}$. We call $G$ \emph{transitive reduced} if  $G=\bar{G}$.\\

\begin{definition}\label{Graph associated with a tableau}
Associated with any tableau $T(L)$, by $G(T(L))$ we denote the transitive reduction of the graph with set of vertices  $\mathfrak{V}$ and an arrow from $(i,j)$ to $(r,s)$ if 
\begin{itemize}
\item[(i)]  $i=r+1$, and $l_{ij}-l_{rs}\in\mathbb{Z}_{\geq 0}$, or 
\item[(ii)] $i=r-1$, and $l_{ij}-l_{rs}\in\mathbb{Z}_{>0}$, or
\item[(iii)] $i=r=n$, $j\neq s$, and $l_{ij}-l_{rs}\in\mathbb{Z}_{\geq 0}$.
\end{itemize}
\end{definition}
\begin{remark} Definition \ref{Graph associated with a tableau} coincides with the concept of maximal set of relations associated with a tableau introduced in \cite{FRZ19}, in particular, if $G$ is a relation graph and $T(L)$ a $G$-realization, then $V_{G}(T(L))$ is simple if and only if $G=G(T(L))$ (see \cite[Theorem 5.6]{FRZ19}). We should also note that graphs of the form $G(T(L))$ are not necessarily ordered graphs. However, there is an isomorphism between the modules obtained by any tableau in the orbit of $T(L)$ under the natural action of the group of $S_n\times S_{n-1}\times\cdots\times S_1$ on Gelfand-Tsetlin tableaux (see \cite[$\S$ 4.3]{FRZ19}). In particular, in order to check the $\Diamond$-condition for $G(T(L))$, it is enough to verify the condition for $G(T(\tilde{L}))$, where $T(\tilde{L})$ is any tableau in the orbit of $T(L)$ satisfying $\tilde{\ell}_{ki}-\tilde{\ell}_{kj}\in\mathbb{Z}_{\geq 0}$ implies $i\leq j$, whenever $k\leq n$.
 \end{remark}
 
One of the main reasons for our interest in relation modules is the well-behave action of the Gelfand-Tsetlin subalgebra. Indeed,

\begin{theorem}\label{mult_rel_mods} For any relation graph $G$, the module $V_G(T(L))$ is a Gelfand-Tsetlin module with diagonalizable action of the generators of the standard Gelfand-Tsetlin subalgebra. Moreover, for every $\chi\in\Gamma^{*}$ in the support of $V_G(T(L))$, $\dim(V_G(T(L))(\chi))=1$.
\end{theorem}

\begin{proof} See \cite[Theorem 5.3, and Theorem 5.8]{FRZ19}. 
\end{proof}

\section{Action on relation graphs}\label{S:Action on relation graphs}
 In this section, we describe the relation graphs leading to highest weight modules with respect to non-standard Gelfand-Tsetlin subalgebras, and describe a method to construct highest weight vectors for such modules.

\subsection{An $S_n$-action on relation graphs} 

From now on, $\h$ will denote the standard Cartan subalgebra of $\mathfrak{gl}_n$. By $G_{\h}$ we denote the directed graph with the set of vertices $\mathfrak{V}$, and arrows from $(i,j)$ to $(i-1,j)$ for any $ 1\leq j< i\leq n$. For example, in the case of $n=5$, the graph $G_{\h}$ has the following form

\begin{center}
\begin{tabular}{c c}
\xymatrixrowsep{0.5cm}
\xymatrixcolsep{0.1cm}\xymatrix @C=0.1em { \scriptstyle{(5,1)}\ar[rd]& &\scriptstyle{(5,2)} \ar[rd]   & &\scriptstyle{(5,3)}\ar[rd] &   &\scriptstyle{(5,4)} \ar[rd] &   &\scriptstyle{(5,5)}\\
& \scriptstyle{(4,1)}\ar[rd]& &\scriptstyle{(4,2)} \ar[rd]   & &\scriptstyle{(4,3)}\ar[rd] &   &\scriptstyle{(4,4)} \\
 & & \scriptstyle{(3,1)}  \ar[rd]   & &\scriptstyle{(3,2)}\ar[rd]  & &\scriptstyle{(3,3)}  \\
&    &     &\scriptstyle{(2,1)} \ar[rd] & &\scriptstyle{(2,2)}  & \\
&  &    & &\scriptstyle{(1,1)} & &\\
}\\

\end{tabular}

\end{center}
\begin{remark}
A direct computation shows that for any relation graph $G$ containing $G_{\mathfrak{h}}$ as a subgraph, and any $G$-realization $T(L)$, the module $V_{G}(T(L))$ is a highest weight module with respect to $\mathfrak{h}$.
\end{remark}

\begin{definition} By $\Sigma$ we denote the set of directed graphs $G$ with vertices in $\mathfrak{V}$, such that for any $1\leq s< r\leq n$, there is an arrow from $(r,s)$ to $(r-1,s)$, or there is an arrow from $(r-1,s)$ to $(r,s)$, and no other arrows are allowed in $G$. 
\end{definition}
For any $1\leq s < r\leq n$ we define $A_{(r,s)}:\Sigma\to \{1,-1\}$ given by 

$$
A_{(r,s)}(G):=\begin{cases}
   1 & \text{ if there is an arrow in $G$ from $(r,s)$ to $(r-1,s)$},\\
     -1 & \text{ if there is an arrow in $G$ from $(r-1,s)$ to $(r,s)$}.
\end{cases}$$

In what follows, we are going to construct a subset $\widetilde{\Sigma}$ of $\Sigma$ that parametrizes all possible graphs associated with the relations satisfied by highest weight vectors with respect to all possible Cartan subalgebras. Moreover, we describe an action of $S_n$ on $\Sigma$ such that $\sigma(G_{\mathfrak{h}})$ corresponds with the relations satisfied by a highest weight vector with respect to $\sigma(\mathfrak{h})$.\\

\begin{proposition}\label{PropTech01} There is a bijective correspondence between the set $\Sigma$ and the set of quase-partitions of $\Delta$ given by $G\mapsto \mathcal{Q}_G:=\{\alpha_{ij}\ |\ A_{(j,i)}(G) = 1\}\cup\{\alpha_{ji}\ |\ A_{(j,i)}(G) = -1\}$.
     
\end{proposition}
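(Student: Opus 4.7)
The plan is to show that both $\Sigma$ and the set of quase-partitions of $\Delta$ are naturally parameterized by the same combinatorial data, namely functions from the index set $I=\{(r,s)\ |\ 1\leq s<r\leq n\}$ to $\{\pm 1\}$, and that the map $G\mapsto \mathcal{Q}_G$ respects these parameterizations. Note that $|I|=\binom{n}{2}=|\Delta|/2$, so the cardinalities already match.

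First I would verify that the map is well-defined, i.e.\ that $\mathcal{Q}_G$ is a quase-partition of $\Delta$ for every $G\in\Sigma$. By the defining condition of $\Sigma$, for every $(r,s)\in I$ the graph $G$ contains exactly one of the two possible arrows between $(r-1,s)$ and $(r,s)$, so the function $A_{(\cdot,\cdot)}(G):I\to\{\pm 1\}$ is well-defined and everywhere determined. Consequently, for each pair of opposite roots $\{\alpha_{ij},\alpha_{ji}\}$ with $i<j$, exactly one of the two is placed into $\mathcal{Q}_G$ (namely $\alpha_{ij}$ if $A_{(j,i)}(G)=1$ and $\alpha_{ji}$ if $A_{(j,i)}(G)=-1$). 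This immediately yields both defining conditions of a quase-partition: $|\mathcal{Q}_G|=|I|=|\Delta|/2$ and $\alpha\in\mathcal{Q}_G$ iff $-\alpha\notin\mathcal{Q}_G$.

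For bijectivity I would construct an explicit inverse. Given a quase-partition $\mathcal{Q}$, for each $(r,s)\in I$ exactly one of $\alpha_{sr},\alpha_{rs}$ lies in $\mathcal{Q}$; define $G_{\mathcal{Q}}\in\Sigma$ by drawing the arrow from $(r,s)$ to $(r-1,s)$ if $\alpha_{sr}\in\mathcal{Q}$, and from $(r-1,s)$ to $(r,s)$ otherwise. Since graphs in $\Sigma$ are completely determined by the values of $A_{(r,s)}$ on $I$, the graph $G_{\mathcal{Q}}$ is unique, and tracing through the definitions shows $\mathcal{Q}_{G_{\mathcal{Q}}}=\mathcal{Q}$ and $G_{\mathcal{Q}_G}=G$.

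The proof is essentially a bookkeeping argument, so no substantial obstacle is expected. The only observation worth highlighting is the matching of the two index sets: the $\binom{n}{2}$ vertical edges whose orientations can be freely chosen in graphs of $\Sigma$ are in natural bijection with the $\binom{n}{2}$ unordered pairs $\{i,j\}$ of distinct elements of $\{1,\ldots,n\}$ that parameterize the pairs $\{\alpha_{ij},\alpha_{ji}\}$ of opposite roots in $\Delta$.
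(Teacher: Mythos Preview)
Your proposal is correct and follows essentially the same approach as the paper: both arguments verify that $\mathcal{Q}_G$ is a quase-partition and construct the explicit inverse $\mathcal{Q}\mapsto G_{\mathcal{Q}}$ by reading off arrow orientations from membership of $\alpha_{sr}$ versus $\alpha_{rs}$ in $\mathcal{Q}$. Your presentation is slightly more systematic in making the common parameterization by functions $I\to\{\pm 1\}$ explicit, but the content is the same.
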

\begin{proof}  Let $\mathcal{Q}$ be a quase-partition of $\Delta$. Define $G$ to be the graph in $\Sigma$ such that $A_{(j,i)}(G)=1$ if $\alpha_{ij} \in  \mathcal{Q}$ and $A_{(j,i)}(G)=-1$ if $-\alpha_{ij} \in  \mathcal{Q}$. As $|\mathcal{Q}| = |\Delta|/2$ we fulfill all the arrows for the graph $G$. Conversely, for $G \in \Sigma$ consider $\mathcal{Q}_G$ as above. Clearly $|\Delta| = 2|\mathcal{Q}_G|$. Now, if $\alpha_{ij}\in\mathcal{Q}_G$ we have two cases. First $i<j$, here $A_{(j,i)(G)}=1$ and if $-\alpha_{ij} = \alpha_{ji} \in \mathcal{Q}_G$ we have $A_{(j,i)(G)}=-1$, which is a contradiction. Similarly, if we have $i>j$. So, $\mathcal{Q}_G$ is a quase-partition of $\Delta.$
\end{proof}

\begin{corollary}\label{GhDelta+}
    The set of arrows of $G_{\h}$ is in bijective correspondence with the set of positive roots $\Delta^{+}$.
\end{corollary}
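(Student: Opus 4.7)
The plan is to derive this corollary as an immediate specialization of Proposition \ref{PropTech01} to the graph $G_{\h}$. I would first observe that $G_{\h}$ indeed lies in $\Sigma$: by definition, for every pair $1\leq s < r\leq n$ the graph $G_{\h}$ contains precisely the arrow from $(r,s)$ to $(r-1,s)$, so exactly one of the two allowed arrows between consecutive vertices in column $s$ is present. In particular, the set of arrows of $G_{\h}$ is naturally indexed by the set of pairs $\{(r,s)\mid 1\leq s<r\leq n\}$.

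Next I would compute $A_{(r,s)}(G_{\h})$ from the definition. Since the arrow in $G_{\h}$ always points downward from $(r,s)$ to $(r-1,s)$, we have $A_{(r,s)}(G_{\h})=1$ for every $1\leq s<r\leq n$. Applying Proposition \ref{PropTech01} then yields
\[
\mathcal{Q}_{G_{\h}}=\{\alpha_{ij}\mid A_{(j,i)}(G_{\h})=1\}\cup\{\alpha_{ji}\mid A_{(j,i)}(G_{\h})=-1\}=\{\alpha_{ij}\mid 1\leq i<j\leq n\}=\Delta^{+},
\]
using the conventions fixed in Section \ref{S:Preliminaries} for the positive roots.

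Finally, to turn this into a bijective correspondence between \emph{arrows} of $G_{\h}$ and roots in $\Delta^{+}$, I would simply make explicit the map sending the arrow from $(r,s)$ to $(r-1,s)$ (with $1\leq s<r\leq n$) to the positive root $\alpha_{sr}=\varepsilon_{s}-\varepsilon_{r}$. Injectivity and surjectivity follow from the fact that both sets are parametrized by the same index set $\{(r,s)\mid 1\leq s<r\leq n\}$, and the previous computation shows that the image is exactly $\Delta^{+}$.

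There is no real obstacle here; the corollary is essentially a translation of the data of the proposition to the concrete graph $G_{\h}$. The only point that requires minor care is keeping track of the indexing conventions (arrows indexed by $(r,s)$ with $s<r$, roots $\alpha_{ij}$ with $i<j$), which is handled by the explicit correspondence $(r,s)\mapsto \alpha_{sr}$.
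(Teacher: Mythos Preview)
Your proof is correct and follows exactly the same approach as the paper: both apply Proposition~\ref{PropTech01} to $G_{\h}$ and read off $\mathcal{Q}_{G_{\h}}=\Delta^{+}$. You simply spell out in more detail the verification that $A_{(r,s)}(G_{\h})=1$ for all $s<r$ and the resulting explicit bijection $(r,s)\mapsto\alpha_{sr}$, which the paper leaves implicit in its one-line proof.
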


\begin{proof} By Proposition \ref{PropTech01}, $G_\h$ corresponds with $\mathcal{Q}_{G_\h}= \Delta^+$.
   
\end{proof}

The previous results allow us to do the following identification. Let $G \in \Sigma$ arbitrary and let $1\leq s < r \leq n$, if $A_{(r,s)}(G) = 1$ the arrow $(r,s) \to (r-1,s)$ corresponds with the positive root $\alpha_{sr}$. On the other hand, if $A_{(r,s)}(G) = -1$, the arrow $(r-1,s) \to (r,s)$ corresponds with the negative root $\alpha_{rs}$. \\

For any $\sigma\in S_n$, the correspondence from Proposition \ref{PropTech01} guaranties the existence of a unique graph $G_{\sigma}$ in $\Sigma$ such that $\mathcal{Q}_{G_{\sigma}}=\sigma(\Delta^+)$. Therefore, the action of $S_n$ on closed partitions induces an action of $S_n$ on $\Sigma$. With this in mind, we have the following definition.

\begin{definition}\label{permutation of a graph}  For $\sigma \in S_n$, we define  $\sigma(G_{\mathfrak{h}}):=G_{\sigma}$. The orbit of $G_\h$ under the action of $S_n$ is denoted by $\tilde{\Sigma}$.
\end{definition}

\begin{lemma}\label{PropTech02}
The correspondence $G\mapsto \mathcal{Q}_G$ defines a bijection between the set $\tilde{\Sigma}$ and the set of closed partitions of $\Delta$.
\end{lemma}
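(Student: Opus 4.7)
The lemma reduces to combining Proposition \ref{PropTech01} with the characterization of closed partitions given in the Preliminaries. The plan is as follows.

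First, I would invoke Proposition \ref{PropTech01} to deduce that the map $G\mapsto \mathcal{Q}_G$ is already injective on the whole of $\Sigma$, hence \emph{a fortiori} injective on the subset $\tilde{\Sigma}$. So the only real content is identifying the image of $\tilde{\Sigma}$ with the set of closed partitions of $\Delta$.

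Next, I would recall from the Partitions/quase-partitions subsection that a quase-partition $\mathcal{Q}$ is a closed partition if and only if $\mathcal{Q}=\sigma(\Delta^{+})$ for some $\sigma\in W=S_n$. Thus the set of closed partitions coincides with the $S_n$-orbit of $\Delta^{+}$ inside the set of quase-partitions.

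Then I would translate this orbit description across the bijection of Proposition \ref{PropTech01}. By Corollary \ref{GhDelta+}, the graph $G_{\h}$ corresponds to $\Delta^{+}$, and by Definition \ref{permutation of a graph} the graph $\sigma(G_{\h})=G_\sigma$ is, by construction, the unique element of $\Sigma$ with $\mathcal{Q}_{G_\sigma}=\sigma(\Delta^{+})$. Therefore
\[
\{\mathcal{Q}_G \mid G\in \tilde{\Sigma}\}
=\{\mathcal{Q}_{\sigma(G_{\h})} \mid \sigma\in S_n\}
=\{\sigma(\Delta^{+}) \mid \sigma\in S_n\},
\]
which is exactly the set of closed partitions of $\Delta$. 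Combined with the injectivity observation this yields the claimed bijection.

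I do not anticipate any serious obstacle here: the statement is essentially a bookkeeping corollary of Proposition \ref{PropTech01} and the Weyl-group characterization of closed partitions, and the only thing to be careful about is invoking Definition \ref{permutation of a graph} correctly so that the orbit of $G_{\h}$ under the $S_n$-action on $\Sigma$ matches the orbit of $\Delta^+$ under the usual $W$-action on quase-partitions.
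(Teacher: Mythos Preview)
Your proposal is correct and follows essentially the same approach as the paper: the paper's proof is a one-line remark that the statement follows from the definition of $\tilde{\Sigma}$ together with the fact that closed partitions are exactly the Weyl-group conjugates of $\Delta^+$, which is precisely what you spell out in detail.
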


\begin{proof}
Follows directly from the definition of $\tilde{\Sigma}$ and the fact that closed partitions are conjugate by the Weyl group action.
\end{proof} 

\begin{theorem}\label{s-act-Gh} If $\sigma \in S_n$,
then the orientation of the arrows of $\sigma(G_{\h})$ is given by
$$A_{(r,s)}(G_{\sigma})=\begin{cases}
1, & \text{ if } \sigma^{-1}(r)>\sigma^{-1}(s),\\
-1, & \text{ if } \sigma^{-1}(r)<\sigma^{-1}(s),
\end{cases}$$
for any $1\leq s < r \leq n$.
\end{theorem}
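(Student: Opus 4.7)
The strategy is pure definition-chasing: the statement unwinds once we translate both $\sigma(G_\h)$ and the arrow function $A_{(r,s)}$ through the bijection from Proposition \ref{PropTech01} and compare them on the closed partition $\sigma(\Delta^+)$.

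First I would recall that by Definition \ref{permutation of a graph}, $G_\sigma = \sigma(G_\h)$ is characterized as the unique graph in $\Sigma$ whose associated quase-partition is $\mathcal{Q}_{G_\sigma} = \sigma(\Delta^+)$. Using the preliminary description of the $W$-action, namely $\sigma(\alpha_{ij}) = \alpha_{\sigma(i)\sigma(j)}$, I would rewrite
\[
\sigma(\Delta^+) \;=\; \{\alpha_{\sigma(i)\sigma(j)}\ |\ 1\leqs i<j\leqs n\},
\]
and immediately read off the following elementary criterion: for indices $p\neq q$, the root $\alpha_{pq}$ lies in $\sigma(\Delta^+)$ if and only if $\sigma^{-1}(p)<\sigma^{-1}(q)$.

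Next I would apply the explicit form of the bijection in Proposition \ref{PropTech01}. Fixing $1\leqs s<r\leqs n$, that bijection says $A_{(r,s)}(G_\sigma)=1$ exactly when $\alpha_{sr}\in \mathcal{Q}_{G_\sigma}$, and $A_{(r,s)}(G_\sigma)=-1$ exactly when $\alpha_{rs}\in \mathcal{Q}_{G_\sigma}$ (note that these are the two cases consistent with the convention $\alpha_{ji}=-\alpha_{ij}$, and they exhaust the possibilities because $\mathcal{Q}_{G_\sigma}$ is a quase-partition). Substituting $\mathcal{Q}_{G_\sigma}=\sigma(\Delta^+)$ and invoking the criterion of the previous step yields $A_{(r,s)}(G_\sigma)=1 \iff \sigma^{-1}(s)<\sigma^{-1}(r)$ and $A_{(r,s)}(G_\sigma)=-1 \iff \sigma^{-1}(r)<\sigma^{-1}(s)$, which is exactly the claim.

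There is no real obstacle here; the only care needed is notational bookkeeping, making sure to distinguish the row/column index pair $(r,s)$ (with $s<r$) of a vertex pair in $\mathfrak V$ from the index pair $(i,j)$ of a root $\alpha_{ij}$, and to remember that the convention $\alpha_{ji}=-\alpha_{ij}$ is what makes the two bulleted cases in Proposition \ref{PropTech01} compatible with a single sign function $A_{(r,s)}$. Once that is kept straight, the equivalence collapses to a one-line check.
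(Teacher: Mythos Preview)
Your proof is correct and follows essentially the same approach as the paper: both arguments unwind the definition $\mathcal{Q}_{G_\sigma}=\sigma(\Delta^+)$ via Proposition~\ref{PropTech01} and the identity $\sigma(\alpha_{ij})=\alpha_{\sigma(i)\sigma(j)}$ to reduce $A_{(r,s)}(G_\sigma)=1$ to the condition $\sigma^{-1}(s)<\sigma^{-1}(r)$. The paper compresses this into a single chain of biconditionals, while you spell out the intermediate criterion $\alpha_{pq}\in\sigma(\Delta^+)\iff\sigma^{-1}(p)<\sigma^{-1}(q)$ explicitly, but the logic is identical.
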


\begin{proof}
By the definition of $G_{\sigma}$ we have $A_{(r,s)}(G_{\sigma})=1$, if and only if $\alpha_{sr} \in \sigma(\Delta^+)$, if and only if $\alpha_{sr} = \sigma(\alpha_{ij}) = \alpha_{\sigma(i)\sigma(j)} $ with $1\leq i<j \leq n$, if and only if $\sigma^{-1}(s)<\sigma^{-1}(r)$.
\end{proof}

\begin{example}\label{w0 of G}
Let $\omega_0$ be the longest element in $S_n$, that is $\omega_{0}(i)=n-i+1$ for any $i=1,\ldots, n$. Hence  $i>j$ implies $\omega_{0}(i)<\omega_{0}(j)$ for any $i,j\in{1,\ldots,n}$, and so $\omega_0(G_{\mathfrak{h}})$ is obtained from $G_{\mathfrak{h}}$ by changing the orientation of any arrow, that is, $A_{(rs)}(\omega_0(G_{\mathfrak{h}}))=-1$ for any $r,s$.
\end{example} 

The previous theorem describes explicitly the graph $G_{\sigma}$, however, in the upcoming sections, it will be useful to provide a step by step construction of the graph $G_{\sigma}$ depending on a given presentation of the permutation $\sigma$ as product of simple transpositions. 

\begin{corollary}\label{co: s-act-Gh} Let $G$ be any graph in $\tilde{\Sigma}$ and let $s_k$ be a simple reflection in $S_{n}$. The graph $s_{k}(G)$ is obtained from $G$ as follows:
\begin{itemize}
\item[(i)] $A_{(k+1,k)}(s_{k}(G))=-A_{(k+1,k)}(G)$,
\item[(ii)] $A_{(r,k)}(s_{k}(G))=A_{(r,k+1)}(G)$ for $k+2\leq r\leq n$,
\item[(iii)] $A_{(r,k+1)}(s_{k}(G))=A_{(r,k)}(G)$ for $k+2\leq r\leq n$, 
\item[(iv)] $A_{(k+1,r)}(s_{k}(G))=A_{(k,r)}(G)$ for $1\leq r\leq k-1$,
\item[(v)] $A_{(k,r)}(s_{k}(G))=A_{(k+1,r)}(G)$ for $1\leq r\leq k-1$,
\item[(vi)] $A_{(i,j)}(s_{k}(G))=A_{(i,j)}(G)$, if $i,j\notin\{k,k+1\}$.
\end{itemize}
\end{corollary}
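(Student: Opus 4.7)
The plan is to reduce everything to Theorem \ref{s-act-Gh}. Since $G\in\tilde{\Sigma}$, by definition $G=G_\tau=\tau(G_{\h})$ for some $\tau\in S_n$. Because the map $\sigma\mapsto\sigma(G_{\h})$ is induced from the $S_n$-action on closed partitions via $\mathcal{Q}_{G_\sigma}=\sigma(\Delta^+)$, we have
\begin{equation*}
\mathcal{Q}_{s_k(G)}=s_k(\mathcal{Q}_{G_\tau})=s_k(\tau(\Delta^+))=(s_k\tau)(\Delta^+),
\end{equation*}
so $s_k(G)=G_{s_k\tau}$. Applying Theorem \ref{s-act-Gh} to both $G_\tau$ and $G_{s_k\tau}$ reduces the problem to comparing $\tau^{-1}(r)$ versus $\tau^{-1}(s)$ with $(s_k\tau)^{-1}(r)$ versus $(s_k\tau)^{-1}(s)$.

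The key identity is $(s_k\tau)^{-1}=\tau^{-1}s_k$, so $(s_k\tau)^{-1}(m)=\tau^{-1}(s_k(m))$ for every $m\in\{1,\dots,n\}$. Recall that $s_k$ swaps $k$ and $k+1$ and fixes every other index. Thus the effect of passing from $G$ to $s_k(G)$ on the orientation at a pair $(r,s)$ (with $1\leqs s<r\leqs n$) is governed entirely by how $s_k$ acts on the pair $\{r,s\}$. The six cases of the corollary correspond precisely to the possibilities for this pair: (i) when $\{r,s\}=\{k,k+1\}$, $s_k$ swaps the two indices so the inequality $\tau^{-1}(k+1)>\tau^{-1}(k)$ is reversed, giving $A_{(k+1,k)}(s_k(G))=-A_{(k+1,k)}(G)$; (ii)--(iii) when exactly one of $r,s$ equals $k$ or $k+1$ and the other is larger than $k+1$, $s_k$ replaces that entry by its partner, which swaps the roles of $(r,k)$ with $(r,k+1)$; (iv)--(v) the analogous swap when the varying index is smaller than $k$; and (vi) if neither $r$ nor $s$ lies in $\{k,k+1\}$, then $s_k$ fixes both $r$ and $s$ and the orientation is unchanged.

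In each of the six cases the verification is a one-line substitution in the criterion of Theorem \ref{s-act-Gh}. There is no real obstacle; the only point requiring care is being consistent about left versus right actions, specifically that the $S_n$-action on $\tilde{\Sigma}$ is compatible with the description $A_{(r,s)}(G_\sigma)=1\Leftrightarrow\sigma^{-1}(r)>\sigma^{-1}(s)$ so that composition on the left of $\tau$ by $s_k$ corresponds to composition on the right of $\tau^{-1}$ by $s_k$. Once this is set up, the six bullets follow immediately by inspection.
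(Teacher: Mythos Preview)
Your proposal is correct and is exactly the intended derivation: the paper states this as a corollary of Theorem~\ref{s-act-Gh} with no separate proof, and your reduction via $s_k(G)=G_{s_k\tau}$ together with $(s_k\tau)^{-1}=\tau^{-1}s_k$ is precisely how the six items are meant to be read off from the criterion $A_{(r,s)}(G_\sigma)=1\Leftrightarrow\sigma^{-1}(r)>\sigma^{-1}(s)$.
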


\subsection{Tableaux associated with graphs in the $S_n$-orbit of $G_{\mathfrak{h}}$}
In this section, we explicitly construct Gelfand-Tsetlin tableaux associated with graphs in $\tilde{\Sigma}$ (i.e. the $S_n$-orbit of $G_{\mathfrak{h}}$ in $\Sigma$), and state some technical lemmas relative to the $S_n$ action.  

\begin{definition}\label{twisted-tableux} 
Associated with $X\in\mathbb{C}^n$, and $\sigma\in S_n$, let $T(Y):=T_{\sigma}(X)$ be the Gelfand-Tsetlin tableau constructed recursively as follows:
    \begin{itemize}
 \item[(i)] Row $n$ is given by $y_{n,i}=x_{\sigma^{-1}(i)}$.
 \item[(ii)] Once row $k$ is constructed, row $k-1$ is given by $$y_{k-1,i}=\begin{cases}
     y_{ki},& \text{if, there is an arrow from $(k,i)$ to $(k-1,i)$ in $G_{\sigma}$,}\\

     y_{ki}+1,& \text{if, there is an arrow from $(k-1,i)$ to $(k,i)$ in $G_{\sigma}$.
     }
 \end{cases}$$
    \end{itemize}
\end{definition}

\begin{lemma}\label{y_ij}
    Let $X\in\mathbb{C}^n$ and $\sigma \in S_n$. The tableau $T(Y):=T_{\sigma}(X)$ satisfies the graph $G_{\sigma}$. Moreover, for $1\leq j \leq i<n$, we have 
\begin{equation}\label{y_ij explicit} 
y_{ij} = y_{nj} + \frac{1}{2} \displaystyle \sum_{\ell = i+1}^{n}(1-A_{(\ell, j)}(G_{\sigma})).
\end{equation}

\end{lemma}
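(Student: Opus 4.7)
The plan is to exploit the fact that the recursive step in Definition \ref{twisted-tableux} can be rewritten in a single closed form using $A_{(k,i)}(G_\sigma)$, and then argue by a straightforward case analysis followed by telescoping.

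First I would note that the two cases in the recursive construction collapse into the identity
\begin{equation*}
y_{k-1,i} \;=\; y_{k,i} + \tfrac{1}{2}\bigl(1 - A_{(k,i)}(G_\sigma)\bigr),
\end{equation*}
since $A_{(k,i)}(G_\sigma)=1$ gives the increment $0$ and $A_{(k,i)}(G_\sigma)=-1$ gives the increment $1$. This observation makes the whole lemma essentially bookkeeping.

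Next I would verify that $T(Y)$ satisfies $G_\sigma$. Because $G_\sigma\in\Sigma$ has arrows only between consecutive rows in the same column, it is enough to check, for each $1\leq s<r\leq n$, the two cases of the orientation $A_{(r,s)}(G_\sigma)$. If $A_{(r,s)}(G_\sigma)=1$ the arrow points down and the recurrence yields $y_{r-1,s}=y_{r,s}$, so $y_{r,s}-y_{r-1,s}=0\in\mathbb Z_{\geq 0}$, as required by condition (a) in the definition of satisfaction. If $A_{(r,s)}(G_\sigma)=-1$ the arrow points up and the recurrence gives $y_{r-1,s}-y_{r,s}=1\in\mathbb Z_{>0}$, matching condition (b). Hence $T(Y)$ satisfies $G_\sigma$.

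Finally, for the explicit formula I would telescope the unified recurrence from row $n$ down to row $i$. Fixing $j\leq i<n$ and summing the increments $y_{\ell-1,j}-y_{\ell,j}=\tfrac{1}{2}(1-A_{(\ell,j)}(G_\sigma))$ over $\ell=i+1,\ldots,n$, one obtains
\begin{equation*}
y_{ij} - y_{nj} \;=\; \sum_{\ell=i+1}^{n}\bigl(y_{\ell-1,j}-y_{\ell,j}\bigr) \;=\; \tfrac{1}{2}\sum_{\ell=i+1}^{n}\bigl(1-A_{(\ell,j)}(G_\sigma)\bigr),
\end{equation*}
which rearranges to the desired equation \eqref{y_ij explicit}. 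The condition $j\leq i$ guarantees that every term $y_{\ell,j}$ in the telescoping sum is a well-defined entry of the tableau (since $j\leq i\leq \ell-1$). There is no real obstacle here; the only point that requires a little care is checking that the recursive formula is defined for each $(k-1,i)$ appearing in the induction, which is immediate because column index $i$ never exceeds row index $k-1$ in the recursion used to build $T(Y)$.
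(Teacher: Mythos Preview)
Your proof is correct and follows essentially the same approach as the paper: both observe that the recursive step unifies to $y_{k-1,i}=y_{k,i}+\tfrac{1}{2}(1-A_{(k,i)}(G_\sigma))$ and then obtain the formula by summing (telescoping) from row $n$ to row $i$. You simply spell out the satisfaction check and the telescoping in more detail than the paper, which states both points in a single sentence.
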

\begin{proof} The fact that $T(Y)$ satisfies the graph $G_{\sigma}$ follows directly from the construction of the tableau. Finally, note that $y_{ij}$ is obtained from $y_{i+1,j}$ by adding $1$ or $0$, depending on the sign of $A_{(i+1,j)}(G_{\sigma})$, so identity (\ref{y_ij explicit}) is obtained recursively.
\end{proof} 

The following technical lemmas will be used only for the proof of Theorem \ref{MainThm01}. We leave their proofs for the appendix. \\

For any tableau $T(R)$, we denote by $\Sigma_{t}(R)$ the sum of the entries in row $t$.
 
\begin{lemma}\label{Lemma: sk and tableaux}
Set $\tau\in S_n$ and $s_k$ a simple transposition. If $T(R)=T_{\tau}(X)$ and $T(W)=T_{s_k\circ\tau}(X)$, then

\begin{itemize}
\item[(i)] $\Sigma_{i}(W)=\Sigma_{i}(R)$ for $i\neq k$.

\item[(ii)] $\Sigma_{k}(W)=\Sigma_{k}(R)+ r_{n,k+1}- r_{n,k} + \tau^{-1}(k+1)-\tau^{-1}(k)$.
\end{itemize}

\end{lemma}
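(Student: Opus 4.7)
The plan is to combine Lemma~\ref{y_ij} with Theorem~\ref{s-act-Gh}. Since $\tfrac{1}{2}(1-A_{(\ell,j)}(G_\sigma))$ equals $1$ precisely when $A_{(\ell,j)}(G_\sigma)=-1$, which by Theorem~\ref{s-act-Gh} is equivalent to $\sigma^{-1}(\ell)<\sigma^{-1}(j)$, Lemma~\ref{y_ij} rewrites as
\[
r_{ij} = x_j + \bigl|\{\ell\,:\,i+1\leq\ell\leq n,\ \tau^{-1}(\ell)<\tau^{-1}(j)\}\bigr|,
\]
and, using $(s_k\circ\tau)^{-1}=\tau^{-1}\circ s_k$,
\[
w_{ij} = x_{s_k(j)} + \bigl|\{\ell\,:\,i+1\leq\ell\leq n,\ \tau^{-1}(s_k\ell)<\tau^{-1}(s_k j)\}\bigr|.
\]

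For part (i), the pivotal observation is that for $i\neq k$ with $i<n$, the window $\{i+1,\ldots,n\}$ either contains both $k$ and $k+1$ (when $i<k$) or neither (when $i\geq k+1$), so $s_k$ restricts to a bijection on it. The substitution $\ell\mapsto s_k(\ell)$ in the count defining $w_{ij}$ then yields $w_{ij}=r_{i,s_k(j)}$. Since $s_k$ either fixes $\{1,\ldots,i\}$ (when $i<k$) or permutes it by swapping $k$ and $k+1$ (when $i>k$), summing over $j=1,\ldots,i$ merely permutes the summands, giving $\Sigma_i(W)=\Sigma_i(R)$; the case $i=n$ is immediate from $\sum_j x_{s_k(j)}=\sum_j x_j$.

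For part (ii), the row $i=k$ falls outside this pattern because $\{k+1,\ldots,n\}$ contains $k+1$ but not $k$. I would split $\Sigma_k(W)-\Sigma_k(R)$ into the constant part $\sum_{j=1}^k(x_{s_k(j)}-x_j)=x_{k+1}-x_k=r_{n,k+1}-r_{nk}$ and a combinatorial remainder $D$ given by the difference of the two counts summed over $1\leq j\leq k$ and $k+1\leq\ell\leq n$. The summands with $j<k$ and $\ell>k+1$ vanish because $s_k$ fixes both coordinates, leaving only three boundary blocks: $j<k$ with $\ell=k+1$; $j=k$ with $\ell>k+1$; and the diagonal $(j,\ell)=(k,k+1)$.

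The main obstacle is showing that this surviving sum $D$ equals $\tau^{-1}(k+1)-\tau^{-1}(k)$. Setting $a:=\tau^{-1}(k)$ and $b:=\tau^{-1}(k+1)$, a case check reveals that each surviving term is of the form $\mathbf{1}[a<m<b]-\mathbf{1}[b<m<a]$, with $m$ ranging over the distinct values $\tau^{-1}(j)$ for $j<k$ (from the first boundary block) and $\tau^{-1}(\ell)$ for $\ell>k+1$ (from the second), together with an extra $\mathbf{1}[a<b]-\mathbf{1}[b<a]$ contributed by the diagonal block. Because $\tau^{-1}$ is a bijection onto $\{1,\ldots,n\}$, these values $m$ together with $a,b$ exhaust $\{1,\ldots,n\}$, so $D$ reduces to a signed count of integers strictly between $a$ and $b$ plus a unit sign, which in either ordering of $a$ and $b$ evaluates to $b-a$, as required.
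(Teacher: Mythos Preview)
Your argument is correct. Both parts are sound: the substitution $\ell\mapsto s_k(\ell)$ in part~(i) works because the window $\{i+1,\dots,n\}$ is $s_k$-stable for $i\neq k$, and your case analysis in part~(ii) correctly evaluates the three boundary blocks and assembles them into $b-a$.

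The route differs from the paper's. The paper keeps the arrow-orientation signs $A_{(\ell,j)}(G_\tau)$ throughout and packages the row-$k$ computation via the auxiliary quantities $M_k(G_\tau)$ and $m_k(G_\tau)$ of Lemma~\ref{Lemma: dual}: it computes $w_{k,j}=r_{k,j}+\tfrac12\bigl(A_{(k+1,j)}(G_\tau)-A_{(k,j)}(G_\tau)\bigr)$ for $j<k$ via Corollary~\ref{co: s-act-Gh}, handles $w_{kk}$ through Lemma~\ref{Lemma: dual}(iv), and then invokes the identity $M_k+m_k+A_{(k+1,k)}=\tau^{-1}(k+1)-\tau^{-1}(k)$ from Lemma~\ref{Lemma: dual}(iii). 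You instead translate Lemma~\ref{y_ij} directly into an inversion count using Theorem~\ref{s-act-Gh}, which lets you bypass Corollary~\ref{co: s-act-Gh} and Lemma~\ref{Lemma: dual} (and hence Lemma~\ref{lem: Mk, mk different}) entirely. Your approach is more self-contained and combinatorially transparent; the paper's route has the advantage of isolating the quantities $M_k,m_k$, which carry independent meaning elsewhere in the argument.
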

\begin{proof}
See appendix \ref{appendix}.
\end{proof}

\begin{lemma}\label{PropTech03} Set $\sigma\in S_n$, $X\in\mathbb{C}^n$, $G:=G_{\sigma}$, and $T(L):=T_{\sigma}(X)$. Given $r<s$, and $\{i_r,i_{r+1},\ldots,i_{s-1}\}$ with $1\leq i_t\leq t$ and $r\leq t <s$, we have:
\begin{itemize}
\item[(i)] If $A_{(s,r)}(G)=1$, then  $T(L+\delta^{r,i_r}+\ldots+\delta^{s-1,i_{s-1}})$ does not  satisfy  $G$.
\item[(ii)] If $A_{(s,r)}(G)=-1$, then $T(L-\delta^{r,i_r}-\ldots-\delta^{s-1,i_{s-1}})$ does not  satisfy $G$.
\end{itemize}

\end{lemma}
\begin{proof}
See appendix \ref{appendix}.
\end{proof}

\subsection{Highest weight modules with respect to different Borel subalgebras}

Recall that $G(T(L))$ denotes the graph associated with the tableau $T(L)$ (see  Definition \ref{Graph associated with a tableau}). For any $\lambda \in \mathfrak{h}^*$, let $\bar{\lambda} := \lambda + \wp$ where $\wp:=(0,-1,-2,\ldots,-n+1)$. Finally, for any tableau $T(R)$, we write $\omega_{t}(R)=t-1+\Sigma_t(R)-\Sigma_{t-1}(R)$. 

\begin{theorem}\label{MainThm01}
Set $\lambda\in\mathfrak{h}^{*}$ and $\sigma\in S_n$. If the graph $G=G(T_{\sigma}(\bar{\lambda}))$ is a relation graph, then  $T_{\sigma}(\bar{\lambda})$ is a highest weight vector of weight $\lambda$ with respect to the triangular decomposition of $\mathfrak{g}$ induced by $\sigma(\mathfrak{h})$. In particular, $V_{G}(T_{\sigma}(\bar{\lambda}))$ is a highest weight module  of highest weight $\lambda$ with respect to the Cartan subalgebra $\sigma(\mathfrak{h})$.
\end{theorem}
\begin{proof}

We first prove that  $T_{\sigma}(\bar{\lambda})$  is a tableau of weight $\lambda$ with respect to $\sigma(\mathfrak{h})$. 
first of all, the statement is true when $\sigma$ is the identity. Indeed, $T(R):=T_{e}(\bar{\lambda})$ is the tableau with entries $r_{ij}=\lambda_j-j+1$, and $
\omega_{k}(T(R))=k-1+\sum\limits_{i=1}^{k}(\lambda_{i}-i+1)-\sum\limits_{i=1}^{k-1}(\lambda_{i}-i+1)=\lambda_k$.\\

In order to prove the statement for any $\sigma$, we consider $\tau\in S_n$ arbitrary and show that whenever $T(R):=T_{\tau}(\bar{\lambda})$  has weight $\lambda$ with respect to $\tau(\mathfrak{h})$, we also have that  $T(W):=T_{s_k\circ \tau}(\bar{\lambda})$ has weight $\lambda$ with respect to $(s_k\circ \tau)(\mathfrak{h})$ for any simple transposition $s_k$.

\begin{itemize}
\item[(i)] Suppose first that $\tau(i)\notin\{k,k+1\}$. Then 
\begin{align*}
\omega_{s_k\circ \tau(i)}(W)=&\omega_{ \tau(i)}(W)
=\tau(i)-1+\Sigma_{\tau(i)}(W)-\Sigma_{\tau(i)-1}(W)\\
=&\tau(i)-1+\Sigma_{\tau(i)}(R)-\Sigma_{\tau(i)-1}(R)\\
=&\omega_{\tau(i)}(R)\\
=&\lambda_i.
\end{align*}
Here we used Lemma \ref{Lemma: sk and tableaux} (i).
\item[(ii)] Suppose that $\tau(i)=k$ and $\tau(j)=k+1$. Then $\omega_{s_k\circ \tau(i)}(W)$ is equal to
\begin{align*} 
\omega_{k+1}(W)=&k+\Sigma_{k+1}(W)-\Sigma_{k}(W)\\
=&k+\Sigma_{k+1}(R)-(\Sigma_{k}(R)+\lambda_{\tau^{-1}(k+1)}-\lambda_{\tau^{-1}(k)})\\
=&\omega_{k+1}(R)-(\lambda_{j}-\lambda_{i})\\
=&\omega_{\tau(j)}(R)-(\lambda_{j}-\lambda_{i})\\
=&\lambda_j-(\lambda_{j}-\lambda_{i})\\
=&\lambda_i.
\end{align*}

Here we used Lemma \ref{Lemma: sk and tableaux} (ii).
\item[(iii)] The case $\tau(i)=k+1$ and $k=\tau(j)$ is analogous to case (ii). 
\end{itemize}

To prove that the tableau  $T(L):=T_{\sigma}(\bar{\lambda})$ is a highest weight vector with respect to $\sigma(\mathfrak{h})$, it is enough to prove that for any $r>s$,
\begin{itemize}
\item[(a)] $E_{r,s}(T(L))=0$, whenever $A_{(r,s)}(G_{\sigma})=1$.
\item[(b)] $E_{s,r}(T(L))=0$, whenever  $A_{(r,s)}(G_{\sigma})=-1$.
\end{itemize}

Indeed, from Remark \ref{general formulas} we have $E_{\ell m} (T(L))= \sum\limits_{\tau \in \Phi_{\ell m}} e_{\ell m} (\tau (L)) T(L+\tau(\varepsilon_{\ell m}))$. To prove (a) we use 
Lemma \ref{PropTech03} (i) to conclude that  $T(L+\tau(\varepsilon_{\ell m}))$ does not satisfy $G$ for any $\tau$. Analogously, to prove (b) we use Lemma \ref{PropTech03} (ii) to show that $T(L+\tau(\varepsilon_{\ell m}))$ does not satisfy $G$ for any $\tau$.
\end{proof}

The following definition is motivated by Theorem \ref{MainThm01}.

\begin{definition}\label{def lambda-sigma-adm}
Let $\sigma\in S_{n}$ and $\lambda\in \mathfrak{h}^*$. We say that $\lambda$ is a \emph{$\sigma$-relation weight} if the graph $G(T_{\sigma}(\bar{\lambda}))$ is a relation graph.
\end{definition}

For $\lambda\in\mathfrak{h}^{*}$, we denote by $M(\lambda)$ the Verma module of highest weight $\lambda$, and by $L(\lambda)$ the simple quotient of $M(\lambda)$ by its unique maximal submodule. According to Definition \ref{def lambda-sigma-adm}, a weight $\lambda$ will be called \emph{$id$-relation weight} if the highest weight module $L(\lambda)$ is a relation Gelfand-Tsetlin module with respect to the standard Gelfand-Tsetlin subalgebra. In \cite{FMR21}, a characterization of $id$-relation weights was provided. Namely,

\begin{proposition}\label{Th: id-relation}A weight $\lambda=(\lambda_1, \dots, \lambda_n)$ is an $id$-relation weight if and only if one of the following conditions holds:
\begin{itemize}
\item[(i)] $\lambda_i-\lambda_j\notin \mathbb{Z}_{\leq i-j} $ for all $1\leq i< j< n$.
\item[(ii)] There exist unique $i,j$ with $1\leq i< j< n$ such that:
    \begin{itemize}
    \item[(a)] $\lambda_r-\lambda_{r+1}\in \mathbb Z_{\geq 0}$ for each $r\geq j$,
    \item[(b)]$\lambda_r-\lambda_s\notin \mathbb{Z}_{\leq r-s} $  for each $r\neq i$ and $s\geq j$,
    \item[(c)] $\lambda_n-\lambda_i\in \mathbb{Z}_{\geq n-i}$. 
     \end{itemize}
\end{itemize}
\end{proposition}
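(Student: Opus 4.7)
The plan is to use Theorem \ref{relation mod} to reduce the statement to a combinatorial analysis of the graph $G(T_{id}(X))$, where $X=\lambda+\wp$. Applying Definition \ref{twisted-tableux} with $\sigma=id$ shows that $T_{id}(X)$ has constant columns with entries $y_{k,i}=\lambda_i-i+1$ for all $1\leq i\leq k\leq n$, since $G_{\mathfrak{h}}$ consists exclusively of downward arrows. By Definition \ref{Graph associated with a tableau}, beyond the always-present vertical arrows $(k,i)\to(k-1,i)$, each further arrow between vertices in columns $i\neq j$ is controlled solely by the single rational number $d_{ij}:=\lambda_i-\lambda_j-(i-j)$ and its membership in $\mathbb{Z}_{\geq 0}$ or $\mathbb{Z}_{>0}$. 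With this dictionary the question becomes: for which $\lambda$ is the resulting graph ordered, non-critical, cross-less, and does it satisfy the $\Diamond$-condition of Theorem \ref{relation mod}?

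For the forward direction, if $\lambda$ satisfies (i), then $d_{ij}\in\mathbb{Z}_{\leq 0}$ cannot occur for any $i<j<n$, so every non-vertical arrow points diagonally downward from some $(r,i)$ to $(r-1,j)$ with $i<j$; a direct check shows this graph is ordered, cross-less, non-critical, and has no adjoining pairs inside rows $k<n$, so the $\Diamond$-condition is vacuous. If $\lambda$ satisfies (ii), the integer cascade from (ii)(a) creates a linked chain of arrows between column $i$ and column $j$ in every row $r\geq j$, condition (ii)(b) rules out any competing integer relation that could spoil ordering or introduce crosses, and (ii)(c) supplies precisely the top-row horizontal arrow needed to close each adjoining pair into a $\Diamond$-subgraph of type $G_1$ or $G_2$.

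For the backward direction, assuming $\lambda$ is an $id$-relation weight but violates (i), I would let $j$ be minimal with some $d_{ij}\in\mathbb{Z}_{\leq 0}$ and argue that this creates a propagating chain of upward arrows which must extend down to row $j$; demanding that the graph remains cross-less forces the witness $i$ to be unique, demanding the $\Diamond$-condition at each adjoining pair in rows $r\geq j$ yields (ii)(a) and (ii)(c), and demanding that ordering continues to hold yields (ii)(b). The main obstacle will be this backward direction: a single extra integer relation can force cascading arrows in unexpected places, so uniqueness of the pair $(i,j)$ needs a careful case analysis distinguishing competing witnesses in the same row, in different rows, and in the top row, and showing in each case that the alternative produces either a cross, a violation of ordering, or an adjoining pair that no $\Diamond$-subgraph can cover.
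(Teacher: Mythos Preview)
The paper does not prove this proposition: its entire proof reads ``See \cite[Theorem 4.8]{FMR21}.'' So your proposal is not competing with any argument in the present paper; you are attempting to supply what the authors outsource to an external reference.

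Your overall strategy---reduce to the combinatorics of $G(T_{id}(\lambda+\wp))$ via the constant-column tableau and then invoke Theorem~\ref{relation mod}---is the natural one. However, your forward-direction sketch for case (i) contains a genuine error. You assert that the graph ``has no adjoining pairs inside rows $k<n$, so the $\Diamond$-condition is vacuous.'' This is false: condition (i) only excludes $d_{ij}\in\mathbb{Z}_{\leq 0}$ for $i<j<n$, so $d_{ij}\in\mathbb{Z}_{>0}$ is perfectly allowed. When that happens, both the downward arrow $(k,i)\to(k-1,j)$ and the upward arrow $(k-1,i)\to(k,j)$ are present, producing the directed path $(k,i)\to(k-1,i)\to(k,j)$ and hence an adjoining pair $((k,i),(k,j))$ for every $j\leq k\leq n-1$. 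The $\Diamond$-condition is therefore \emph{not} vacuous; you must actually verify it (it does hold, via a $G_1$-subgraph with $p=j$ and $q=i$, but that has to be argued). A related oversight: condition (i) places no restriction on $d_{in}$, so the top-row arrow between $(n,i)$ and $(n,n)$ can point either way; you need to handle the resulting failure of literal ``orderedness'' through the column-permutation remark following Definition~\ref{Graph associated with a tableau}.

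Your backward direction is, as you acknowledge, only an outline. The uniqueness of the pair $(i,j)$ in (ii) and its interaction with the unconstrained top row are exactly where the work lies, and nothing in your sketch yet shows how a second integer relation forces a cross, a criticality, or an uncoverable adjoining pair. That case analysis is the substance of the cited theorem and cannot be waved through.
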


\begin{proof}
See \cite[Theorem 4.8]{FMR21}.
\end{proof}

\section{Main results and applications}\label{S:Main results and applications}

In this section, we present conditions for a weight $\lambda$ to be a $\sigma$-relation weight. In particular, we provide realizations of a simple highest weight module as a relation module with a twisted action of the Gelfand-Tsetlin formulas. We also explore conditions for a weight $\lambda$ to be a relation weight and provide an inductive method to construct relation weights. Finally, we construct non-highest weight modules using localization functors.\\

Recall that for any $\lambda \in \mathfrak{h}^*$, we set $\bar{\lambda} := \lambda + \wp$ where $\wp:=(0,-1,-2,\ldots,-n+1)$.

\subsection{\texorpdfstring{$\sigma$}{sigma}-relation modules and weights} 
Each element $\sigma$ of $S_n$ induces a natural automorphism $\varphi_{\sigma}:\mf{gl}_n\to \mf{gl}_n$ defined on generators by $\varphi_{\sigma}(E_{ij}):=E_{\sigma(i),\sigma(j)}.$
Denote by $\widetilde{\varphi}_{\sigma}$ the extension of $\varphi_{\sigma}$ to $U(\mf{gl}_n)$, and denote by $B_{\sigma}$  the image of $B\subseteq U(\mf{gl}_n)$ under $\widetilde{\varphi}_{\sigma}$. For any $\mathfrak{gl}_n$-module $M$ and $\sigma\in S_n$, we denote by $M^{\sigma}$ the module $M$ twisted by the action of the automorphism $\varphi_{\sigma}$ (i.e. the vector space $M$ with the action of $U(\mathfrak{gl}_n)$ given by $x\cdot_{\sigma}m=\tilde{\varphi}_\sigma(x)\cdot m$).

\begin{definition}\label{def sigma-adm}
Given $\sigma\in S_{n}$, a $\mathfrak{gl}_{n}$-module $M$ will be called \emph{$\sigma$-relation module} if $M$ is isomorphic to  $N^{\sigma}$ for some relation module $N$ with respect to the standard Gelfand-Tsetlin subalgebra $\Gamma$. 
\end{definition}

\begin{theorem}\label{MainThm02}
Set $\lambda\in\mathfrak{h}^*$, $\sigma\in S_n$, and $G = G(T_{\sigma}(\bar{\lambda}))$. If $\lambda$ is  a $\sigma$-relation weight, then $L(\lambda)$ is a $\sigma$-relation module, more precisely
$$L(\lambda)\simeq [V_{G}(T_{\sigma}(\bar{\lambda}))]^{\sigma}.$$
\end{theorem}

\begin{proof}
Follows directly from Theorem \ref{MainThm01}.
\end{proof}

\begin{example} \label{generic-pointed}
Any $\sigma$-relation module has all its $\Gamma_\sigma$-multiplicities bounded by one (see Theorem \ref{mult_rel_mods}), however, the converse is not necessarily true. Indeed, let $\lambda=\left(-\frac{1}{6},-\frac{2}{3}, \frac{5}{6} \right)$, the tableaux $T_{id}(\bar{\lambda})$, and $T_{s_2}(\bar{\lambda})$ are, respectively, 
 $$ {\tiny \xymatrix@R=1pt@C=1pt{-\dfrac{1}{6}&&-\dfrac{5}{3}&&-\dfrac{7}{6}\\&-\dfrac{1}{6}&&-\dfrac{5}{3}&\\&&-\dfrac{1}{6}&&}\hspace{2cm} \xymatrix@R=1pt@C=1pt{-\dfrac{1}{6}&&-\dfrac{7}{6}&&-\dfrac{5}{3}\\&-\dfrac{1}{6}&&-\dfrac{1}{6}&\\&&-\dfrac{1}{6}&&} } $$

the module $L(\lambda)$ is a relation module, as the graph associated with  $T_{id}(\bar{\lambda})$ is 
 
 \begin{center}
\begin{tabular}{c c }
\xymatrixrowsep{0.5cm}
\xymatrixcolsep{0.1cm}\xymatrix @C=0.1em {
  &\scriptstyle{(3,1)}   \ar[dr] & &\scriptstyle{(3,2)} & & \scriptstyle{(3,3)}   \ar[ld]\\
 & & \scriptstyle{(2,1)} \ar[ur] \ar[dr]   & &\scriptstyle{(2,2)}    \\
    &  &   &\scriptstyle{(1,1)}  &   \\
}
\end{tabular}
\end{center} 
 
Moreover, all its weight multiplicities are $1$ (see \cite[Section 7.2, Case (G11)]{FGR21}), and consequently, the action of $\Gamma_\sigma$ is diagonalizable for any $\sigma\in S_3$. However, $L(\lambda)$ is not $s_2$-relation since the graph associated with the tableau $T_{s_2}(\bar{\lambda})$ given by 

 \begin{center}
\begin{tabular}{c c }
\xymatrixrowsep{0.5cm}
\xymatrixcolsep{0.1cm}\xymatrix @C=0.1em {
  &\scriptstyle{(3,1)} \ar[dr]  & &\scriptstyle{(3,2)} & & \scriptstyle{(3,3)}   \\
 & & \scriptstyle{(2,1)}  \ar[ur] \ar[dr]  & &\scriptstyle{(2,2)} \ar[ul] \ar[ld]   \\
    &  &   &\scriptstyle{(1,1)}  &   \\
}
\end{tabular}
\end{center} 
which is critical (see Definition \ref{Def: critical}).
\end{example}  

\begin{example} It is possible to have a weight $\lambda$ and permutations $\sigma\neq \tau$, such that $\lambda $ is a $\sigma$-relation weight but is not  a $\tau$-relation weight. Indeed, for $\lambda=(-1,0,1)$, the tableaux $T_{id}(\bar{\lambda})$, and $T_{s_2}(\bar{\lambda})$ are, respectively, 

$$\xymatrix@R=1pt@C=1pt{-1&&-1&&-1\\&-1&&-1&\\&&-1&&}\hspace{2cm}\xymatrix@R=1pt@C=1pt{-1&&-1&&-1\\&-1 &&0&\\&&-1&&}$$
The module $L(\lambda)$ is not a relation module since the graph associated with the tableau $T_{id}(\bar{\lambda})$ is critical. However, $L(\lambda)$ is a $s_2$-relation module since the graph associated with $T_{s_2}(\bar{\lambda})$ is

 \begin{center}
\begin{tabular}{c c }
\xymatrixrowsep{0.5cm}
\xymatrixcolsep{0.1cm}\xymatrix @C=0.1em {
  &\scriptstyle{(3,1)} \ar[rr]  & &\scriptstyle{(3,2)} \ar[rr]& & \scriptstyle{(3,3)}  \ar[llld] \\
 & & \scriptstyle{(2,1)}  \ar[dr]   & &\scriptstyle{(2,2)} \ar[ulll] |!{[ul];[l]}\hole \\
    &  &   &\scriptstyle{(1,1)}  &   \\
}
\end{tabular}
\end{center} 

which is a relation graph. 

\end{example}  

\begin{proposition}\label{Relsl3}
If $\lambda=(\lambda_1,\lambda_2, \lambda_3)$ is a $\mathfrak{gl}_3$-weight, there exists $\sigma\in S_3$ such that $\lambda$ is  a $\sigma$-relation weight.
\end{proposition} 

\begin{proof} We do a case by case construction of the desired $\sigma$.

\begin{itemize}
   \item[(i)] Suppose first that $\{\lambda_{1}-\lambda_2,\lambda_{1}-\lambda_3, \lambda_{2}-\lambda_3\}\cap \mathbb{Z}=\emptyset$. In this case, for any $\sigma\in S_3$ the tableau $T_{\sigma}(\bar{\lambda})$ does not have adjoining pairs, and consequently, the graph associated with the tableau is a relation graph. 
 \item[(ii)] Suppose that $\lambda_{i}-\lambda_j\in\mathbb{Z}$ for exactly one pair $1\leq i<j\leq 3$. We provide an explicit $\sigma$ by considering three cases. In all three cases, the tableaux that appear do not have adjoining pairs, and consequently, the associated graph is a relation graph.
 \begin{itemize}
     \item [(1)] Suppose that $\lambda_1-\lambda_3 \in \mathbb{Z}$. In this situation, we consider $\sigma=id$.  Indeed, the tableau $T_{id}(\bar{\lambda})$, and the possible graphs are given by 
$$
\begin{tabular}{c c}
\xymatrixrowsep{0.5cm}
\xymatrixcolsep{0.1cm}\xymatrix @C=0.1em {
 & & \scriptstyle{\lambda_1}  \ar[rd]   & &\scriptstyle{\lambda_2-1} \ar[dr]  & &\scriptstyle{\lambda_3-2}  \\
&    &     &\scriptstyle{\lambda_1} \ar[rd] \ar[urrr] |!{[u];[rrr]}\hole & &\scriptstyle{\lambda_2-1}  & \\
&  &    & &\scriptstyle{\lambda_1} & &\\
}
\end{tabular}
\begin{tabular}{c c}
\xymatrixrowsep{0.5cm}
\xymatrixcolsep{0.1cm}\xymatrix @C=0.1em {
 & & \scriptstyle{\lambda_1}  \ar[dr]   & &\scriptstyle{\lambda_2-1}\ar[rd]   & &\scriptstyle{\lambda_3-2} \ar@/_/
 [llll]   \\
&    &     &\scriptstyle{\lambda_1}  \ar[rd] & &\scriptstyle{\lambda_2-1}& \\
&  &    & &\scriptstyle{\lambda_1}  & &\\
}
\end{tabular}
$$

\item [(2)] Suppose that $ \lambda_2-\lambda_3 \in \mathbb{Z}$. In this situation, we consider $\sigma=id$.  Indeed, the tableau $T_{id}(\bar{\lambda})$, and the possible graphs are given by 
$$
\begin{tabular}{c c}
\xymatrixrowsep{0.5cm}
\xymatrixcolsep{0.1cm}\xymatrix @C=0.1em {
 & & \scriptstyle{\lambda_1}  \ar[dr]  & &\scriptstyle{\lambda_2-1} \ar[dr] & &\scriptstyle{\lambda_3-2}  \\
&    &     &\scriptstyle{\lambda_1} \ar[rd] & &\scriptstyle{\lambda_2-1}\ar[ru]  & \\
&  &    & &\scriptstyle{\lambda_1} & &\\
}
\end{tabular}
\begin{tabular}{c c}
\xymatrixrowsep{0.5cm}
\xymatrixcolsep{0.1cm}\xymatrix @C=0.1em {
 & & \scriptstyle{\lambda_1}  \ar[rd]   & &\scriptstyle{\lambda_2-1}\ar[dr] & &\scriptstyle{\lambda_3-2} \ar[ll] \\
&    &     &\scriptstyle{\lambda_1}  \ar[rd] & &\scriptstyle{\lambda_2-1}  & \\
&  &    & &\scriptstyle{\lambda_1} & &\\
}
\end{tabular}
$$

\item [(3)] Suppose that $\lambda_1-\lambda_2 \in \mathbb{Z}$. In this situation we consider $\sigma=s_2$. The tableau $T_{s_2}(\bar{\lambda})$ and the possible graphs are given by 
     $$
\begin{tabular}{c c}
\xymatrixrowsep{0.5cm}
\xymatrixcolsep{0.1cm}\xymatrix @C=0.1em {
 & & \scriptstyle{\lambda_1}  \ar[rd]   & &\scriptstyle{\lambda_3-2} & &\scriptstyle{\lambda_2-1} \\
&    &     &\scriptstyle{\lambda_1} \ar[rd]  \ar[urrr] |!{[u];[rrr]}\hole  & &\scriptstyle{\lambda_3-1}  \ar[lu] & \\
&  &    & &\scriptstyle{\lambda_1} & &\\
}
\end{tabular}
\begin{tabular}{c c}
\xymatrixrowsep{0.5cm}
\xymatrixcolsep{0.1cm}\xymatrix @C=0.1em {
 & & \scriptstyle{\lambda_1}  \ar[dr]   & &\scriptstyle{\lambda_3-2}   & &\scriptstyle{\lambda_2-1}  \ar@/_/[llll]   \\
&    &     &\scriptstyle{\lambda_1}  \ar[rd] & &\scriptstyle{\lambda_3-1} \ar[ul] & \\
&  &    & &\scriptstyle{\lambda_1}  & &\\
}
\end{tabular}
$$ 
\end{itemize}
   
\item[(iii)] Suppose that $\lambda_{i}-\lambda_j\in\mathbb{Z}$ for any $1\leq i<j\leq 3$. We provide an explicit $\sigma$ by considering three cases. In all cases, the graphs that appear satisfy the $\diamond$-condition; therefore, Theorem \ref{relation mod} implies that $\lambda$ is a $\sigma$-relation weight.
\begin{itemize}
\item[(a)]Suppose that $ \lambda_1-\lambda_2 \in \mathbb{Z}_{\geq 0}$. In this case, we consider $\sigma=id$. Indeed, the tableau $T_{id}(\bar{\lambda})$ and the possible graphs are given by 
$$
\begin{tabular}{c c}
\xymatrixrowsep{0.5cm}
\xymatrixcolsep{0.1cm}\xymatrix @C=0.1em {
 & & \scriptstyle{\lambda_1}  \ar[rd]   & &\scriptstyle{\lambda_2-1} \ar[dr] & &\scriptstyle{\lambda_3-2}  \\
&    &     &\scriptstyle{\lambda_1} \ar[rd] \ar[ur]& &\scriptstyle{\lambda_2-1}\ar[ru]  & \\
&  &    & &\scriptstyle{\lambda_1} \ar[ur]& &\\
}
\end{tabular}
\begin{tabular}{c c}
\xymatrixrowsep{0.5cm}
\xymatrixcolsep{0.1cm}\xymatrix @C=0.1em {
 & & \scriptstyle{\lambda_1}  \ar[rd]   & &\scriptstyle{\lambda_2-1}\ar[dr] & &\scriptstyle{\lambda_3-2} \ar[ll] \\
&    &     &\scriptstyle{\lambda_1}\ar[rrru] |!{[u];[rrr]}\hole \ar[rd] & &\scriptstyle{\lambda_2-1}  & \\
&  &    & &\scriptstyle{\lambda_1}\ar[ru]& &\\
}
\end{tabular}
\begin{tabular}{c c}
\xymatrixrowsep{0.5cm}
\xymatrixcolsep{0.1cm}\xymatrix @C=0.1em {
 & & \scriptstyle{\lambda_1}  \ar[dr]   & &\scriptstyle{\lambda_2-1}\ar[rd]   & &\scriptstyle{\lambda_3-2} \ar@/_/[llll]   \\
&    &     &\scriptstyle{\lambda_1}\ar[ur]  \ar[rd] & &\scriptstyle{\lambda_2-1}& \\
&  &    & &\scriptstyle{\lambda_1}\ar[ru]  & &\\
}
\end{tabular}
$$

\item[(b)] Suppose that $\lambda_1-\lambda_2 \in \mathbb{Z}_{< 0}$, and 
$ \lambda_2-\lambda_3 \in \mathbb{Z}_{< 0}$. In this situation, we consider $\sigma=s_2$. In this case, the tableau $T_{s_2}(\bar{\lambda})$ and the associated graph are 

$$
\begin{tabular}{c c}
\xymatrixrowsep{0.5cm}
\xymatrixcolsep{0.1cm}\xymatrix @C=0.1em {
 & & \scriptstyle{\lambda_1}  \ar[dr]   & &\scriptstyle{\lambda_3-2} \ar[rr] & &\scriptstyle{\lambda_2-1} \ar@/_/[llll]   \\
&    &     &\scriptstyle{\lambda_1} \ar[dr]  & &\scriptstyle{\lambda_3-1}  \ar[ul] & \\
&  &    & &\scriptstyle{\lambda_1}& &\\
}
\end{tabular}
$$

\item[(c)] Suppose that $\lambda_1-\lambda_2 \in \mathbb{Z}_{< 0}$ and $ \lambda_2-\lambda_3 \in \mathbb{Z}_{\geq 0}$. Here we consider $\sigma=s_1s_2s_1$. Indeed, the tableau $T_{\sigma}(\bar{\lambda})$, and the possible graphs are given by 
$$
\begin{tabular}{c c}
\xymatrixrowsep{0.5cm}
\xymatrixcolsep{0.1cm}\xymatrix @C=0.1em {
 & & \scriptstyle{\lambda_3-2}   & &\scriptstyle{\lambda_2-1} \ar[rr] & &\scriptstyle{\lambda_1} \ar[ldll] |!{[dd];[ll]}\hole \\
&    &     &\scriptstyle{\lambda_3-1}\ar[lu]    & &\scriptstyle{\lambda_2} \ar[ld] \ar[lu] & \\
&  &    & &\scriptstyle{\lambda_3}\ar[lu] & &\\
}
\end{tabular}
\begin{tabular}{c c}
\xymatrixrowsep{0.5cm}
\xymatrixcolsep{0.1cm}\xymatrix @C=0.1em {
 & & \scriptstyle{\lambda_3-2} \ar@/^/[rrrr]    & &\scriptstyle{\lambda_2-1} \ar[dl]  & &\scriptstyle{\lambda_1}     \\
&    &     &\scriptstyle{\lambda_3-1}\  \ar[ul] & &\scriptstyle{\lambda_2}  \ar[dl] \ar[lu]& \\
&  &    & &\scriptstyle{\lambda_3} \ar[lu] & &\\
}
\end{tabular}
$$

\end{itemize}

 \end{itemize} 
\end{proof}

\begin{corollary}\label{Rel_n<3}
Let $\lambda$ be any $\mathfrak{gl}_n$-weight with $n\leq 3$. Then there exists $\sigma\in S_n$ such that $\lambda$ is  a $\sigma$-relation weight.
\end{corollary}

\begin{proof}
Follows from Proposition \ref{Relsl3} and the fact that for $n\leq 2$, the graphs do not have adjoining pairs and, consequently, are relation graphs.
\end{proof}

\subsection{Applications} We start by presenting a generalization of \cite[Theorem 4.8]{FMK23}.

\begin{proposition}\label{rel_any_perm}
    Let  $\lambda=(\lambda_1, \lambda_2, \dots, \lambda_n)$ be a $\mathfrak{gl}_n$-weight. In the following cases, $\lambda$ is  a $\sigma$-relation weight for all $\sigma\in S_n$:

    \begin{itemize}
\item[(i)] If $\lambda_i-\lambda_j \notin \mathbb Z$ for all $1\leq i < j\leq n$,

\item[(ii)] If $\lambda_i-\lambda_j \in \mathbb Z_{\geq 0}$ for all $1\leq i < j\leq n$.
    \end{itemize}
\end{proposition}

\begin{proof}
In case (i) $(\lambda_i-i+1)-(\lambda_j-j+1) \notin \mathbb Z$ for all $1\leq i < j\leq n$, then the graph $G=G(T_{\sigma}(\bar{\lambda}))$ does not have adjoining pairs; therefore, $V_{G}(T_{\sigma}(\bar{\lambda}))$ is a simple relation $\Gamma$-module for all $\sigma$. In case (ii) $\lambda$ is an integral dominant $\mathfrak{gl}_n$-weight, then $L(\lambda)$ is a finite-dimensional $\mathfrak{gl}_n$-module.

\end{proof}

\begin{proposition}
Let $\lambda=(\lambda_1, \lambda_2, \dots, \lambda_n)$ be a $\mathfrak{gl}_n$-weight. If  $\lambda_1-\lambda_n \notin \mathbb{Z}_{\geq 1-n}$, $\lambda_r-\lambda_s \notin \mathbb{Z}$ for all $1\leq s \leq n$, and $r\notin\{ 1,s,n\}$. Then the Verma module $M(\lambda)$ is a $\sigma$-relation module for all $\sigma\in S_n$.
\end{proposition}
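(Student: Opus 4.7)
The plan is to reduce the proposition to Theorem \ref{main theorem} by establishing (a) $M(\lambda) \simeq L(\lambda)$ and (b) $\lambda$ is a $\sigma$-relation weight for every $\sigma \in S_n$. For (a), the standard Verma simplicity criterion for $\mathfrak{gl}_n$ demands $\lambda_i - \lambda_j \notin \mathbb{Z}_{\geq j-i+1}$ for every $1 \leq i < j \leq n$. The hypothesis, applied with both orderings of each pair $\{r,s\}$, forces $\lambda_r - \lambda_s \notin \mathbb{Z}$ whenever $\{r,s\} \neq \{1,n\}$: if $r,s\in\{2,\dots,n-1\}$ either ordering satisfies $r\notin\{1,s,n\}$, while if $s\in\{1,n\}$ then the other index lies in $\{2,\dots,n-1\}$ and again gives $r\notin\{1,s,n\}$. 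This handles the simplicity condition automatically for every pair distinct from $\{1,n\}$. For the pair $(1,n)$, the required exclusion $\lambda_1 - \lambda_n \notin \mathbb{Z}_{\geq 2-n}$ follows from the strictly stronger hypothesis $\lambda_1 - \lambda_n \notin \mathbb{Z}_{\geq 1-n}$, so $M(\lambda) \simeq L(\lambda)$.

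\textbf{Analysis of $\mathcal{G} = G(T_\sigma(X))$.} Fix $\sigma \in S_n$, put $X = \sigma^{-1}(\lambda + \wp)$ and set $j^* = \sigma^{-1}(1)$, $j^{**} = \sigma^{-1}(n)$. By Lemma \ref{y_ij} the entries of $T_\sigma(X)$ in column $j$ differ from $y_{nj} = \lambda_{\sigma(j)} - \sigma(j) + 1$ by a non-negative integer, so two entries in distinct columns $j, j'$ have integer difference iff $\lambda_{\sigma(j)} - \lambda_{\sigma(j')} \in \mathbb{Z}$, which by the hypothesis can occur only when $\{j, j'\} = \{j^*, j^{**}\}$. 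If $\lambda_1 - \lambda_n \notin \mathbb{Z}$ then $\mathcal{G}$ has no inter-column arrows and agrees with $G_\sigma$ after transitive reduction; its connected components are vertical chains with at most one vertex per row, so there are no adjoining pairs and $\mathcal{G}$ is a relation graph by Theorem \ref{relation mod}. If instead $\lambda_1 - \lambda_n \in \mathbb{Z}_{\leq -n}$, inter-column arrows appear between columns $j^*$ and $j^{**}$; in particular $y_{n, j^{**}} - y_{n, j^*} = \lambda_n - \lambda_1 - (n-1) \geq 1$, giving the unique row-$n$ arrow $(n, j^{**}) \to (n, j^*)$. The boundary value $\lambda_1 - \lambda_n = 1 - n$ would produce a row-$n$ cycle between these two vertices, and it is precisely this case that the strict bound in the hypothesis excludes.

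\textbf{Completion and main obstacle.} Passing to an ordered representative of the $S_n \times S_{n-1} \times \cdots \times S_1$-orbit of $T_\sigma(X)$ (as allowed by the remark following Definition \ref{Graph associated with a tableau}), one verifies that the merged connected component of columns $j^*$ and $j^{**}$ satisfies orderedness, crosslessness, and the $\Diamond$-condition of Theorem \ref{relation mod}. This shows $\mathcal{G}$ is a relation graph, so $\lambda$ is a $\sigma$-relation weight. Theorem \ref{main theorem} then gives $L(\lambda) \simeq [V_\mathcal{G}(T_\sigma(X))]^\sigma$, and combined with $M(\lambda) \simeq L(\lambda)$ exhibits $M(\lambda)$ as a $\sigma$-relation module for every $\sigma$. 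The principal technical difficulty is the combinatorial verification in the integer case: locating the new inter-column arrows in every row, ruling out crosses with the vertical arrows inherited from $G_\sigma$, and checking the $\Diamond$-condition on each adjoining pair created by the merger of the two columns.
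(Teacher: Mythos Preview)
Your approach coincides with the paper's: split on whether $\lambda_1-\lambda_n$ is an integer, and in the integral case show that the graph $G(T_\sigma(\sigma^{-1}(\lambda+\wp)))$ is a relation graph by analysing the single nontrivial connected component formed by the two columns carrying $\lambda_1$ and $\lambda_n-n+1$. Two small points. First, you usefully make explicit the identification $M(\lambda)\simeq L(\lambda)$ via Verma simplicity, which the paper leaves implicit when it appeals to Theorem~\ref{main theorem}; note however that your stated criterion has a sign slip (it should read $\lambda_i-\lambda_j\notin\mathbb{Z}_{\geq i-j+1}$), though the specific bound $\mathbb{Z}_{\geq 2-n}$ you apply for the pair $(1,n)$ is correct. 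Second, where you stop at ``one verifies'' for the $\Diamond$-condition, the paper supplies the missing sentence: since the two special column indices have $\sigma^{-1}$-values equal to the extremes $1$ and $n$, Theorem~\ref{s-act-Gh} forces every arrow in one column to point up and every arrow in the other to point down, so the adjoining pairs are exactly $((k,i),(k,j))$ for $j\le k\le n-1$ (with $i<j$ the two column indices), each of which sits in a $G_2$-type subgraph with row-$(k{+}1)$ vertices $(k{+}1,i)$ and $(k{+}1,j)$. Inserting this line would complete your argument at the same level of detail as the paper.
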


\begin{proof}
If $\lambda_1-\lambda_n \notin \mathbb{Z}$, the statement follows from Proposition \ref{rel_any_perm}. Now suppose that $\lambda_1-\lambda_n \in \mathbb{Z}_{< 1-n}$ and define $X=\sigma(\lambda +\wp)$, then there exists $1\leq i<j \leq n$ such that $\sigma(1)=i$ and $\sigma(n)=j$ or $\sigma(n)=i$ and $\sigma(1)=j$. Hence,  $x_i-x_j\in \mathbb{Z}_{>0}$ or $x_i-x_j\in \mathbb{Z}_{<0}$ and $x_r-x_s\notin \mathbb{Z}$ for any $1\leq r, s\leq n$ such that $r\neq i, j, s$. Consider the case  $\sigma(n)=i$ and $\sigma(1)=j$ (the other is similar) in this situation, the tableau $T_\sigma(X)$ has entries with integral differences given by $v_{ki}$ and $v_{kj}$ for all $k=j, j+1, \dots ,n$, on the other hand, by Theorem \ref{s-act-Gh} $A_{(k, i)}(G_\sigma)=1$ and $A_{(k, j)}(G_{\sigma})=-1$ for all $k>j$, thus the set of the adjoining pairs in $G=G_\sigma \cup \{(n,i) \longrightarrow (n,j)  \}$ is $\{((k,i), (k,j) )\ |\ k =j, j+1, \dots , n-1 \}$ and $G$ satisfies the $\diamond$-condition. Therefore, $G$ is the graph associated to $T_\sigma(X)$ and the statement follows from Theorem \ref{MainThm02}.
\end{proof}

Given any graph $G$ with set of vertices $\mathfrak{V}$, and $1\leq r<s\leq n$, we denote by $G_{(r,s)}$ the subgraph of $G$ obtained from $G$ by restriction to the set of vertices $\{(i,j)\ |\ r\leq j\leq i\leq s\}$. Also, for any $A=\{a_1,\ldots,a_k\}\subseteq\{1,\ldots,n\}$ with $a_1<a_2<\cdots<a_k$, we will write $\wp_{A}:=(1-a_1,1-a_2,\ldots,1-a_k)$, and  $\wp_{(k)}:=\wp_{\{1,\ldots,k\}}$.

\begin{theorem}\label{Th:admgrphs}
Let $\lambda=(\lambda_1,\dots, \lambda_n)$ be a $\mathfrak{gl}_n$-weight, and $k \leq n$ such that 

\begin{itemize}
    \item[(i)] There exists $A: = \{a_1, \ldots, a_k\} \subset \{1,\ldots,n\}$ such that $\lambda_i-\lambda_j\in\mathbb{Z}$ with $i\neq j$ implies $i,j\in A$;
    \item[(ii)] There exists $\tau\in S_k$ such that the $\mathfrak{gl}_k$-weight  $\mu_A:=(\lambda_{a_1}, \lambda_{a_2}, \ldots, \lambda_{a_k})+\wp_{A}-\wp_{(k)}$ is  a $\tau$-relation weight.
    \end{itemize}

    Then there exist at least $p (n-k)!$ permutations $\sigma \in S_n$ such that $\lambda$ is a $\sigma$-relation weight, where $p=\#\{ \tau \in S_k \ |\ \mu_A \text{ is a $\tau$-relation weight} \}$.

\end{theorem}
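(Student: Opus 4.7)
The plan is to exhibit, for every pair $(\tau,\rho)\in S_{k}\times S_{n-k}$ with $\tau$ making $\mu_{A}$ a $\tau$-relation weight, a distinct permutation $\sigma\in S_{n}$ for which $\lambda$ is a $\sigma$-relation weight. Writing $B=\{b_{1}<\cdots<b_{n-k}\}=\{1,\ldots,n\}\setminus A$, I define
\[
\sigma(i)=\begin{cases} a_{\tau(i)}, & 1\le i\le k,\\ b_{\rho(i-k)}, & k<i\le n.\end{cases}
\]
Distinct pairs $(\tau,\rho)$ produce distinct $\sigma$ because $\sigma|_{\{1,\dots,k\}}$ recovers $\tau$ (its image is $A$) and $\sigma|_{\{k+1,\dots,n\}}$ recovers $\rho$; this yields the required $p(n-k)!$ candidates.

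Set $X=\sigma^{-1}(\lambda+\wp)$ and $\tilde X=\tau^{-1}(\mu_{A}+\wp_{(k)})$. A direct computation using $\mu_{A}+\wp_{(k)}=(\lambda_{a_{t}}+1-a_{t})_{t=1}^{k}$ gives $X_{i}=\tilde X_{i}$ for $1\le i\le k$, so by Theorem \ref{main theorem} it suffices to show that $\mathcal{G}:=G(T_{\sigma}(X))$ is a relation graph. Hypothesis (i) forces $x_{i}-x_{j}\in\mathbb{Z}$ if and only if $\{i,j\}\subset\{1,\dots,k\}$, and Lemma \ref{y_ij} propagates this dichotomy to every pair of entries of $T_{\sigma}(X)$. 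Consequently no arrow of $\mathcal{G}$ connects a column $\le k$ to a column $>k$ or two distinct columns both $>k$, and $\mathcal{G}$ splits as a disjoint union of a main component $\mathcal{G}_{0}$ supported on the trapezoid of columns $\le k$, together with single-column chains $\mathcal{G}_{j}$ for each $j>k$. Each $\mathcal{G}_{j}$ has at most one vertex per row, hence is trivially ordered, non-critical, cross-less, and vacuously satisfies the $\Diamond$-condition; the task reduces to $\mathcal{G}_{0}$.

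The top-row entries $y_{n,j}=x_{j}$ of $\mathcal{G}_{0}$ in columns $\le k$ coincide with the top-row entries $\tilde X_{j}$ of $T_{\tau}(\tilde X)$, so the horizontal arrows in row $n$ of $\mathcal{G}_{0}$ match the horizontal arrows in row $k$ of $G(T_{\tau}(\tilde X))$. For lower rows, Lemma \ref{y_ij} yields $y_{r,j}=\tilde y_{r,j}+\tfrac12\sum_{\ell=k+1}^{n}(1-A_{(\ell,j)}(G_{\sigma}))$ whenever $1\le j\le r\le k$, a purely column-dependent integer shift that preserves all integer-difference relations. Hence the induced subgraph of $\mathcal{G}_{0}$ on rows $1,\dots,k$ and columns $\le k$ is isomorphic as a directed arrow graph to $G(T_{\tau}(\tilde X))$, and by hypothesis (ii) it inherits ordering, non-criticality, cross-less and the $\Diamond$-condition.

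It remains to control the "buffer" rows $k+1,\dots,n-1$ of $\mathcal{G}_{0}$. Theorem \ref{s-act-Gh} computes $A_{(\ell,j)}(G_{\sigma})$ for $\ell>k$, $j\le k$ explicitly from the block structure of $\sigma$, and Corollary \ref{co: s-act-Gh} shows that the induced orientation pattern in columns $1,\dots,k$ of rows $k,\dots,n$ is uniform enough that (a) no cross configuration arises across the buffer; (b) every adjoining pair in a buffer row admits a $G_{1}$-type $\Diamond$-witness built from the vertical $G_{\sigma}$-arrows in the surrounding rows; and (c) buffer rows extend connected components rather than fragment them, preserving non-criticality. Combined with the triangular identification, Theorem \ref{relation mod} then gives that $\mathcal{G}$ is a relation graph, and Definition \ref{def lambda-sigma-adm} promotes this to $\lambda$ being a $\sigma$-relation weight for each of the $p(n-k)!$ permutations constructed. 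The principal obstacle is the verification of (a)-(c): although the orientations $A_{(\ell,j)}(G_{\sigma})$ are explicit, ruling out crosses and exhibiting $\Diamond$-diamonds in the presence of the transitive-reduction step of Definition \ref{Graph associated with a tableau} requires a delicate case analysis when $A$ is placed non-contiguously inside $\{1,\dots,n\}$.
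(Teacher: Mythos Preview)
Your placement of the $A$-block in the \emph{first} $k$ columns is the wrong move, and it creates two genuine problems rather than one. The issue you flag --- the buffer rows $k+1,\dots,n$ and the unverified claims (a)--(c) --- is real and left open, but there is an earlier error you did not notice: your claim that the restriction of $\mathcal{G}_{0}$ to rows $1,\dots,k$ is isomorphic to $G(T_{\tau}(\tilde X))$ is false in general. The shift $c_{j}=\tfrac12\sum_{\ell=k+1}^{n}(1-A_{(\ell,j)}(G_{\sigma}))$ equals the number of elements of $B$ below $a_{\tau^{-1}(j)}$ (with the appropriate convention for $\sigma$), hence $c_{j}=a_{\tau^{-1}(j)}-\tau^{-1}(j)$, which is constant in $j$ only when $A$ is an interval. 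A non-constant column shift preserves integrality of differences but not their signs, so arrow directions between distinct columns can flip. For example with $n=4$, $k=2$, $A=\{1,4\}$, $\tau=\mathrm{id}$ one gets $c_{1}=0$, $c_{2}=2$, and an arrow $(1,1)\to(2,2)$ in $G(T_{\tau}(\tilde X))$ can become $(2,2)\to(1,1)$ in $\mathcal{G}_{0}$. Since the only hypothesis available is that $G(T_{\tau}(\tilde X))$ is a relation graph, once that identification fails there is nothing left to control the $\Diamond$-condition on the trapezoid.

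The paper avoids both difficulties with a single change: it sends the $A$-block to the \emph{last} $k$ columns, defining $\sigma$ by $\sigma^{-1}(n-k+i)=a_{\tau^{-1}(i)}$. Then the subgraph $\mathcal{G}_{(n-k+1,n)}$ is the full upper-right triangle, lying at the very top of its columns, so there are no buffer rows above it and no column shifts at all; one checks $A_{(i,j)}(G_{\tau})=A_{(n-k+i,n-k+j)}(G_{\sigma})$ directly from Theorem~\ref{s-act-Gh}, and the entries of $T_{\sigma}(\sigma^{-1}X)$ on that triangle coincide \emph{exactly} with those of $T_{\tau}(\tau^{-1}(\mu_{A}+\wp_{(k)}))$, giving $\mathcal{G}_{(n-k+1,n)}=G_{A}$. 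Every remaining connected component of $\mathcal{G}$ lives in a single column $\le n-k$ (condition~(i) forbids cross-column integer differences there), hence has no adjoining pairs and is trivially a relation graph. The $(n-k)!$ free choices come from $\sigma^{-1}$ restricted to $\{1,\dots,n-k\}$.
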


\begin{proof}
Consider the weights $X:=\lambda+\wp_{(n)}=(\lambda_1, \dots, \lambda_i-i+1, \dots, \lambda_n-n+1)$, and 
$Y:=(\lambda_{a_1}, \lambda_{a_2}, \ldots, \lambda_{a_k})+\wp_{A}=(\lambda_{a_1}-a_1+1, \lambda_{a_2}-a_2+1, \dots, \lambda_{a_i}-a_i+1, \dots,\lambda_{a_k}-a_k+1)$.

Let $\tau$ be as in condition (ii), then the graph $G_{A}:=G(T_{\tau}(\mu_{A}+\wp_{(k)}))$ is a relation graph. We consider $\sigma \in S_n$ such that $\sigma^{-1}(n-k+i)=a_{\tau^{-1} (i)}$ for any $1\leq i\leq k$. Then $$\sigma X=(\lambda_{\sigma^{-1}(1)}-\sigma^{-1}(1)+1,\dots, \lambda_{\sigma^{-1}(n-k)}-\sigma^{-1}(n-k)+1,Y_{\tau^{-1}(1)},Y_{\tau^{-1}(2)},\dots, Y_{\tau^{-1}(k)}). $$
By Theorem \ref{s-act-Gh}, $A_{(i,j)}(G_{\tau})=A_{(n-k+i,n-k+j)}(G_{\sigma})$ for any  $1\leq j <i \leq k$.  Moreover, as $\tau Y=\tau(\mu_{A}+\wp_{(k)})$, we have that the graph $\mathcal{G}:=G(T_{\sigma}(X))$ is such that  $\mathcal{G}_{(n-k+1,n)}=G_A$ is a relation graph. Finally, condition (ii) implies that any connected component of the graph $\mathcal{G}\setminus \mathcal{G}_{(n-k+1,n)}$ is a relation graph.
\end{proof}

\begin{corollary}\label{co: admgrphs}
Let $\lambda=(\lambda_1,\dots, \lambda_n)$ be a $\mathfrak{gl}_n$-weight, and $k< n$ such that 
    \begin{itemize}
    \item[(i)] There exists $t\leq n-k$ such that the $\mathfrak{gl}_k$-weight 
    $\mu_t:=(\lambda_t, \lambda_{t+1}\dots, \lambda_{k+t-1})$ is a $\tau$-relation weight for some $\tau\in S_k$. 
    \item[(ii)] $\lambda_i-\lambda_j\in\mathbb{Z}$ with $i\neq j$ implies $t\leq i,j< k+t$.
    \end{itemize}

     Then there exist at least $p (n-k)!$ permutations $\sigma \in S_n$ such that $\lambda$ is a $\sigma$-relation weight, where $p=\#\{ \tau \in S_k \ |\ \mu_t \text{ is a $\tau$-relation weight} \}$.
\end{corollary}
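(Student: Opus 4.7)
The plan is to deduce this corollary directly from Theorem \ref{Th:admgrphs} by making the explicit choice $A=\{t,t+1,\ldots,t+k-1\}$, together with the observation that the property of being a $\tau$-relation weight is invariant under adding a scalar vector. First I would unwind the notation: with this choice $a_i = t+i-1$, so
$$\wp_A - \wp_{(k)} = \bigl((1-t)-0,\, (-t)-(-1),\, \ldots,\, (2-t-k)-(1-k)\bigr) = (1-t)(1,1,\ldots,1).$$
Hence $\mu_A = (\lambda_{a_1},\ldots,\lambda_{a_k}) + \wp_A - \wp_{(k)} = \mu_t + (1-t)(1,\ldots,1)$.

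Next I would verify that for any $\tau\in S_k$ and any scalar $c\in\mathbb{C}$, the weight $\nu + c(1,\ldots,1)$ is a $\tau$-relation weight if and only if $\nu$ is. Indeed, Definition \ref{twisted-tableux} constructs $T_\tau(\tau^{-1}(\nu+\wp_{(k)}))$ by placing $\tau^{-1}(\nu+\wp_{(k)})$ in row $k$ and propagating entries upward via differences of $0$ or $1$ prescribed by $G_\tau$. Shifting $\nu$ by the scalar $c$ translates every entry of the tableau by $c$, which preserves all pairwise differences. By Definition \ref{Graph associated with a tableau}, the associated graph $G(T_\tau(\tau^{-1}(\nu+\wp_{(k)})))$ is thus unchanged, and therefore so is its being a relation graph. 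Applying this with $\nu = \mu_t$ and $c = 1-t$, we conclude that $\mu_A$ is a $\tau$-relation weight exactly when $\mu_t$ is, and in particular the two counts
$$p = \#\{\tau\in S_k\mid \mu_A\text{ is a }\tau\text{-relation weight}\} = \#\{\tau\in S_k\mid \mu_t\text{ is a }\tau\text{-relation weight}\}$$
coincide.

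Finally I would check that the hypotheses of Theorem \ref{Th:admgrphs} are met for this choice of $A$: condition (ii) of the corollary states precisely that $\lambda_i-\lambda_j\in\mathbb{Z}$ with $i\neq j$ forces $t\leq i,j<t+k$, i.e.\ $i,j\in A$, matching condition (i) of the theorem; and condition (i) of the corollary together with the scalar-shift invariance above gives condition (ii) of the theorem. Theorem \ref{Th:admgrphs} then yields at least $p(n-k)!$ permutations $\sigma\in S_n$ making $\lambda$ a $\sigma$-relation weight, which is the desired conclusion. The only substantive point is the scalar-shift invariance; everything else is bookkeeping, so I do not expect a genuine obstacle.
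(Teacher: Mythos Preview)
Your proposal is correct and follows essentially the same route as the paper: set $A=\{t,t+1,\ldots,t+k-1\}$, observe that $\mu_A=\mu_t+(1-t)(1,\ldots,1)$, use the scalar-shift invariance of the associated graph (the paper states this as $G(T_\nu(\mu_t))=G(T_\nu(\tilde{\mu}_t))$ with $\tilde{\mu}_t=\mu_t+(1-t)(1,\ldots,1)$), and then invoke Theorem~\ref{Th:admgrphs}. The only minor slip is that in Definition~\ref{twisted-tableux} the tableau is built from the top row downward, not ``upward''; this does not affect your argument since a global shift by $c$ propagates to every entry regardless of direction.
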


\begin{proof} The weights $\mu_t=(\lambda_t,\dots, \lambda_{k+t-1})$, and $\tilde{\mu_t}:=(\lambda_t-t+1,\dots, \lambda_{k+t-1}-t+1)$ satisfy $G(T_{\nu}(\mu_t))=G(T_{\nu}(\tilde{\mu_t}))$ for any $\nu\in S_k$. In particular, $\mu_t$ is a $\tau$-relation weight  if and only if $\tilde{\mu_t}$ is a $\tau$-relation weight. The statement follows from Theorem \ref{Th:admgrphs} applied to the set $A=\{\ell +t-1\ |\ 1\leq \ell\leq k\}$.
\end{proof}

\begin{corollary}\label{co: lambda-adm}
Let $\lambda=(\lambda_1,\dots, \lambda_n)$ be a $\mathfrak{gl}_n$-weight, and $n_{\lambda}$  the number of permutations $\sigma \in S_n$ such that $\lambda$ is a $\sigma$-relation weight. 
 \begin{itemize}
    \item[(i)] If there exist $k\leq n$ and  $t\leq n-k+1$ such that 
    $\lambda_i- \lambda_{i+1} \in \mathbb{Z}_{\geq 0}$ for all $t\leq i\leq t+k-1$, and $\lambda_i-\lambda_j\in\mathbb{Z}$ with $i\neq j$ implies  $t\leq i,j\leq k+t-1$. Then $n_{\lambda}\geq k!(n-k)!$.

\item[(ii)] If $\lambda_i-\lambda_j \notin \mathbb Z_{\leq i-j}$ for any  $1\leq i < j< n$, then $n_{\lambda}\geq m!(n-m)!$, where $m=\#\{i\ |\ \lambda_i-\lambda_j\in \mathbb Z_{>i-j} \text{ for some } j\}$. 
\item[(iii)] If there exists $A\subseteq \{1,\ldots,n\}$ with $\# A=k\leq 3$ such that  $\lambda_i-\lambda_j\in\mathbb{Z}$ with $i\neq j$ implies  $i,j\in A$, then $n_{\lambda}\geq (n-k)!$.
    \end{itemize}

\end{corollary}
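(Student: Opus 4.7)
The strategy is to derive each part from Theorem~\ref{Th:admgrphs} or its specialization Corollary~\ref{co: admgrphs} by an appropriate choice of the subset $A\subseteq\{1,\ldots,n\}$, and to compute the number $p=\#\{\tau\in S_{|A|}:\mu_A\text{ is a }\tau\text{-relation weight}\}$.

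For part (i), I apply Corollary~\ref{co: admgrphs} directly with the given $k$ and $t$: the chain condition on $\lambda$ says that $\mu_t=(\lambda_t,\ldots,\lambda_{t+k-1})$ is an integral dominant $\mathfrak{gl}_k$-weight, so Proposition~\ref{relation for any permutation}(ii) yields that $\mu_t$ is a $\tau$-relation weight for every $\tau\in S_k$; hence $p=k!$ and the bound $n_\lambda\geq k!(n-k)!$ follows. For part (iii), I apply Theorem~\ref{Th:admgrphs} with the given $A$ of cardinality $k\leq 3$: hypothesis (i) of the theorem is the assumption on $A$, and Corollary~\ref{relation n leseq 3} supplies at least one $\tau\in S_k$ for which $\mu_A$ is a $\tau$-relation weight, giving $p\geq 1$ and $n_\lambda\geq(n-k)!$.

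For part (ii), the plan is to take $A$ to be the set of indices involved in some integer difference among the $\lambda_i$ (so that hypothesis (i) of Theorem~\ref{Th:admgrphs} holds automatically), and to use the hypothesis $\lambda_i-\lambda_j\notin\mathbb{Z}_{\leq i-j}$ for $i<j<n$ to show that $\mu_A$ is integral dominant. Indeed, for any integer-difference pair $i<j<n$ the hypothesis forces $\lambda_i-\lambda_j>i-j$, which after the $\wp_A-\wp_{(k)}$-shift is precisely the dominance step $(\mu_A)_p\geq(\mu_A)_{p+1}$ between consecutive entries of $\mu_A$ belonging to the same integer-equivalence class not containing $n$. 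Combined with the observation that $I_<=\{i:\exists\, j,\ \lambda_i-\lambda_j\in\mathbb{Z}_{>i-j}\}$ of cardinality $m$ coincides (away from a possible boundary effect at $n$) with the set of non-minimum indices in the integer-equivalence classes, Proposition~\ref{relation for any permutation}(ii) applied to the dominant part of $\mu_A$ together with Theorem~\ref{Th:admgrphs} should produce at least $m!(n-m)!$ admissible permutations $\sigma$.

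The main obstacle lies precisely in (ii), in the treatment of the index $n$: since the hypothesis imposes no constraint on $\lambda_i-\lambda_n$, the entry of $\mu_A$ indexed by $n$ may fail the dominance step, and matching the natural bound $|A|!(n-|A|)!$ coming from Theorem~\ref{Th:admgrphs} with the claimed $m!(n-m)!$ will require a case split on the sign of $\lambda_i-\lambda_n-(i-n)$, together with an additional argument (possibly invoking Proposition~\ref{Th: id-relation}(ii)) in the borderline case $\lambda_i-\lambda_n=i-n$ in order to secure the count.
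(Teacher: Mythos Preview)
Your treatment of parts (i) and (iii) is correct and matches the paper's proof exactly: the paper simply cites Proposition~\ref{relation for any permutation} together with Theorem~\ref{Th:admgrphs} for (i), and Theorem~\ref{Th:admgrphs} together with Corollary~\ref{relation n leseq 3} for (iii), which is precisely what you do (invoking the specialization Corollary~\ref{co: admgrphs} for (i) is harmless).

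For part (ii), however, your proposal is not a proof but a sketch with explicitly acknowledged gaps. You correctly identify the two ingredients the paper itself cites---Proposition~\ref{relation for any permutation} and Theorem~\ref{Th:admgrphs}---but you then stop at ``should produce'' and ``will require a case split \ldots\ together with an additional argument'', without carrying any of this out. In particular, you never fix a choice of $A$, never verify that $\mu_A$ is $\tau$-relation for the required number of $\tau$, and never reconcile the count $|A|!(n-|A|)!$ with the claimed $m!(n-m)!$. The difficulties you raise about the index $n$ and about the borderline case $\lambda_i-\lambda_n=i-n$ are reasonable observations, but flagging an obstacle is not the same as overcoming it. The paper's own proof of (ii) is a single sentence asserting that it follows from the same two results; whatever one thinks of that level of detail, your proposal does not reach even that point, since you end by listing what remains to be done rather than doing it. To complete the argument along the paper's lines you must actually exhibit the set $A$ and the permutations $\tau$, and check that the resulting $p(n-k)!$ is at least $m!(n-m)!$.
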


\begin{proof}
The first two statements follow from Proposition \ref{rel_any_perm} and Theorem  \ref{Th:admgrphs}. The third statement follows from Theorem \ref{Th:admgrphs} and Corollary \ref{Rel_n<3}.
\end{proof}

\begin{theorem}\label{Cor_MainThm02}
Let $\lambda=(\lambda_1,\dots, \lambda_n)$ be a  $\mathfrak{gl}_n$-weight. If at least $4$ entries of $\bar{\lambda}$ are equal, then there is no $\sigma\in S_n$ such that $\lambda$ is a $\sigma$-relation weight.
\end{theorem}
\begin{proof}
  Suppose that there exist $\sigma \in S_{n}$, such that $\mathcal{G}:=G(T_{\sigma}(\bar{\lambda}))$ is a relation graph, and $L(\lambda)\simeq [V_{\mathcal{G}}(T_{\sigma}(\bar{\lambda}))]^{\sigma}$. By construction, the top row of the tableau $T(U):=T_{\sigma}(\bar{\lambda})$ is given by $\sigma(\bar{\lambda})$ thus, by hypothesis, it has at least $4$ equal entries. Say $a=u_{n,i}=u_{n,j}=u_{n,k}=u_{n,l}$ with $i<j<k<l$. Again, by construction of $T(U)$, $\{u_{n-1,i}, u_{n-1,j}, u_{n-1,k}\}\subseteq \{a, a+1\}$, which implies that at least $2$ elements of the set $\{u_{n-1,i}, u_{n-1,j}, u_{n-1,k}\}$ are equal, then $T(U)$ is critical. This is a contradiction as $T(U)$ is a $\mathcal{G}$-realization with $\mathcal{G}$ being a relation graph.
\end{proof}

\begin{corollary}
 The $\mathfrak{sl}_n$-module $M(-\wp)$ is a $\sigma$-relation module for some $\sigma\in S_n$ if and only if $n\leq 3$.
 \end{corollary}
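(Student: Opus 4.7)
The plan is to combine Theorem \ref{consequence main theorem} with the small-$n$ analysis provided by Corollary \ref{relation n leseq 3} and Example \ref{M(-rho), n=3}.

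First, I would observe that $-\rho$ is antidominant: $\langle -\rho + \rho, \alpha^\vee \rangle = 0 \notin \mathbb{Z}_{>0}$ for every positive root $\alpha$, so $M(-\rho)$ is simple and equals $L(-\rho)$. This reduces the question to deciding when $L(-\rho)$ is a $\sigma$-relation module, which (via Theorem \ref{main theorem} and the converse implication discussed below) is controlled by whether $-\rho$ is a $\sigma$-relation weight. Lifting $-\rho$ to a $\mathfrak{gl}_n$-weight by $\lambda = (c, c+1, c+2, \ldots, c+n-1)$ for any $c \in \mathbb{C}$, a direct computation yields
\[ \mu := \lambda + \wp = (c, c, c, \ldots, c), \]
so all $n$ entries of $\mu$ coincide.

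For $n \geq 4$, since $\mu$ has at least four equal entries, Theorem \ref{consequence main theorem} immediately gives that $\lambda$ is not a $\sigma$-relation weight for any $\sigma \in S_n$. To conclude that $M(-\rho) = L(-\rho)$ is not a $\sigma$-relation module, I would argue by contradiction: suppose $L(-\rho) \cong V_G(T(L'))^\sigma$ for some relation graph $G$ and $G$-realization $T(L')$. Since $L(-\rho)$ is simple, the remark following Definition \ref{Graph associated with a tableau} forces $G = G(T(L'))$; and reading Theorem \ref{graph fpr diferent cartan} in reverse, the unique highest weight line of $L(-\rho)$ must correspond to a basis tableau of $V_G(T(L'))$ whose weight under the twisted action equals $-\rho$, and a weight-counting argument of the type used in that proof pins $T(L') = T_\sigma(\sigma^{-1}(-\rho + \wp))$. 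But the proof of Theorem \ref{consequence main theorem} shows that the graph of this tableau is critical, contradicting the assumption that $G$ is a relation graph.

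For $n \leq 3$, the ``if'' direction is immediate. For $n \leq 2$ no adjoining pairs arise, and Corollary \ref{relation n leseq 3} supplies $\sigma = \mathrm{id}$ directly. For $n = 3$, Example \ref{M(-rho), n=3} exhibits $\sigma = s_2$ together with the corresponding tableau and relation graph, and Theorem \ref{main theorem} then realizes $M(-\rho)$ as an $s_2$-relation module. The main obstacle lies in the converse implication used in the $n \geq 4$ analysis: showing that a $\sigma$-relation realization of $L(-\rho)$ forces $-\rho$ to be a $\sigma$-relation weight. This rests on the uniqueness of the highest weight line in a simple module together with the explicit tableau description of Definition \ref{twisted-tableux}, and requires careful tracking of how the twisted Cartan action determines the tableau entries row by row.
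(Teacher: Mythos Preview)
Your overall approach coincides with the paper's: for $n\geq 4$ invoke Theorem \ref{consequence main theorem}, and for $n\leq 3$ invoke the small-rank analysis. The paper's proof is literally the one line ``Follows from Theorem \ref{consequence main theorem}, and Proposition \ref{sl3 relation}.'' Your observation that $M(-\rho)=L(-\rho)$ by antidominance, and your computation that $\lambda+\wp$ has all entries equal, are exactly the facts needed to feed into those two results.

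Where you go further than the paper is in flagging the distinction between ``$-\rho$ is not a $\sigma$-relation \emph{weight}'' (which is what Theorem \ref{consequence main theorem} actually proves) and ``$M(-\rho)$ is not a $\sigma$-relation \emph{module}'' in the sense of Definition \ref{def sigma-adm}. The paper does not address this; it treats the two notions as interchangeable for simple highest weight modules, just as in Example \ref{generic-pointed}, where ``$L(\lambda)$ is not $s_2$-relation'' is justified solely by the criticality of $G(T_{s_2}(s_2(\lambda+\wp)))$. So the gap you identify is real, but it is a gap the paper simply elides.

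Your own resolution of that gap, however, is not complete. You correctly note that the highest weight line in $V_G(T(L'))^\sigma$ is spanned by a single basis tableau (the highest weight space is one-dimensional and basis tableaux are $\mathfrak{h}$-weight vectors). But the assertion that ``a weight-counting argument of the type used in that proof pins $T(L')=T_\sigma(\sigma^{-1}(-\rho+\wp))$'' is not carried out and is not obvious: the proof of Theorem \ref{graph fpr diferent cartan} only verifies that $T_\sigma(X)$ \emph{is} a highest weight vector, not that every highest weight tableau in a relation module arises via the recursion of Definition \ref{twisted-tableux}. Central character considerations fix the top row (here as $(c,\ldots,c)$), but recovering the row-by-row constraint $y_{k-1,i}\in\{y_{ki},y_{ki}+1\}$ from the vanishing of the twisted positive root vectors is the missing step you would still need to supply.
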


\begin{proof}
Follows from Theorem \ref{Cor_MainThm02}, and Proposition \ref{Relsl3}.
\end{proof}

\subsection{Localization of \texorpdfstring{$\sigma$}{sigma}-relation modules}

For a reductive Lie algebra $\mathfrak{g}$ we recall the definition of the localization functor on $\mathfrak{g}$-modules. For more details on this topic, we refer the reader to  \cite{Deo80, Mat00, FK23}.\\

Let $f \in \mathfrak{g}$ be a locally $\operatorname{ad}$-nilpotent regular element in $U(\mathfrak{g})$. We denote by $U(\mathfrak{g})_{(f)}$ the left ring of fractions of $U(\mathfrak{g})$ with respect to the multiplicative set $\{f^n\ |\  n \in \mathbb{Z}_{\geq 0}\}\subset U(\mathfrak{g})$. In addition, consider a one-parameter family $\{\Theta_f^z\}_{z\in\mathbb{C}}$ of algebra automorphisms of $U(\mathfrak{g})_{(f)}$, defined by

\begin{align*}
  \Theta_f^z(u) = \sum_{k=0}^\infty \binom{z+k-1}{k} f^{-k}\operatorname{ad}(f)^k(u).
\end{align*}

The \emph{twisted localization functor} $D^z_f$, relative to $f$ and $z$, on the category of $\mathfrak{g}$-modules is defined by $ D^z_f(M) = U(\mathfrak{g})_{(f)} \otimes_{U(\mathfrak{g})}M$,
where $M$ is a $\mathfrak{g}$-module. The action of $U(\mathfrak{g})_{(f)}$ on $D^z_f(M)$  is twisted through $\Theta_f^z$, i.e., $u(v) := \Theta_f^z(u)v$
for $u\in U(\mathfrak{g})_{(f)}$ and $v \in D_f^z(M)$. We use the notation $D_f$ instead of $D_f^0$. If $f$ acts injectively on $M$, we can consider $M$ as a submodule of $D_f(M)$ and define the \emph{twisting functor} $T_f$ on the category of $\mathfrak{g}$-modules as $T_f(M) = D_f(M)/M$.\\ 

As a direct consequence of the explicitness of the action of $\mathfrak{gl}_n$ on relation modules, in \cite[$\S$ 5.1]{FMR21} the authors described the localization of several classes of Gelfand-Tsetlin modules with respect to $f=E_{21}$. In what follows, as an application of our results, we construct and provide some structural results for $\sigma$-relation modules localized in the direction of $f=E_{\sigma^{-1}(2),\sigma^{-1}(1)}$.

\begin{theorem}
\label{twistedErs} Let $M$ be a $\sigma$-relation $\mathfrak{gl}_n$-module, and $f:=E_{\sigma^{-1}(2),\sigma^{-1}(1)}$.
\begin{itemize}
\item[(i)] If $f$ acts injectively on $M$, then $T_f(M)$ is a $\sigma$-relation  $\mathfrak{gl}_n$-module.
\item[(ii)] If $f$ acts injectively on $M$, then $D_f^z(M)$ is a $\sigma$-relation $\mathfrak{gl}_n$-module for any $z \in \mathbb{C}$.
\item[(iii)] If $f$ acts bijectively on $M$, then $M\simeq D_f^z(N)$ for some simple  $\sigma$-relation $\mathfrak{gl}_n$-module $N$ with an injective action of $f$ and some $z \in \mathbb{C}\setminus \mathbb{Z}$.
\end{itemize}

\end{theorem}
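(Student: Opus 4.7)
The plan is to reduce everything to the case $\sigma=\id$, already handled in \cite[$\S$ 5.1]{FMR21}, by exploiting the fact that $\sigma$-relation modules are by construction $\widetilde{\varphi}_\sigma$-twists of relation modules and that $\widetilde{\varphi}_\sigma$ intertwines $f$ with $E_{21}$.

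\textbf{Setup.} By Definition \ref{def sigma-adm}, write $M\simeq N^\sigma$ for some relation $\mathfrak{gl}_n$-module $N$ with respect to the standard Gelfand-Tsetlin subalgebra. The computation $\widetilde{\varphi}_\sigma(f)=\widetilde{\varphi}_\sigma(E_{\sigma^{-1}(2),\sigma^{-1}(1)})=E_{21}$ shows that the action of $f$ on $M=N^\sigma$ coincides with the action of $E_{21}$ on $N$. Hence ``$f$ acts injectively (resp.\ bijectively) on $M$'' is equivalent to ``$E_{21}$ acts injectively (resp.\ bijectively) on $N$''.

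\textbf{Commutation of localization with the $\sigma$-twist.} The automorphism $\widetilde{\varphi}_\sigma$ extends to a ring isomorphism $\widetilde{\varphi}_\sigma \colon U(\mathfrak{gl}_n)_{(f)} \to U(\mathfrak{gl}_n)_{(E_{21})}$, and since $\widetilde{\varphi}_\sigma \circ \operatorname{ad}(f)^k = \operatorname{ad}(E_{21})^k \circ \widetilde{\varphi}_\sigma$ for every $k\geq 0$, the defining series of $\Theta^z$ gives
\[
\widetilde{\varphi}_\sigma \circ \Theta_f^z = \Theta_{E_{21}}^z \circ \widetilde{\varphi}_\sigma.
\]
From this identity I would derive natural isomorphisms of $\mathfrak{gl}_n$-modules
\[
D_f^z(L^\sigma) \simeq D_{E_{21}}^z(L)^\sigma \quad\text{and}\quad T_f(L^\sigma) \simeq T_{E_{21}}(L)^\sigma,
\]
valid for every $\mathfrak{gl}_n$-module $L$ on which $E_{21}$ acts injectively.

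\textbf{Application.} For (i) and (ii), the results of \cite[$\S$ 5.1]{FMR21} guarantee that $T_{E_{21}}(N)$ and $D_{E_{21}}^z(N)$ are relation modules, so combining with the isomorphisms above yields that $T_f(M)$ and $D_f^z(M)$ are $\sigma$-relation modules. For (iii), bijectivity of $f$ on $M$ transfers to bijectivity of $E_{21}$ on $N$, and by \cite[$\S$ 5.1]{FMR21} there exist a simple relation module $N'$ with injective $E_{21}$-action and some $z\in\mathbb{C}\setminus\mathbb{Z}$ such that $N\simeq D_{E_{21}}^z(N')$. Twisting by $\sigma$, one obtains $M\simeq N^\sigma\simeq D_f^z\bigl((N')^\sigma\bigr)$, and $(N')^\sigma$ is the required simple $\sigma$-relation module on which $f$ acts injectively.

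\textbf{Main obstacle.} The only non-formal step is verifying that the algebra-level intertwiner $\widetilde{\varphi}_\sigma \circ \Theta_f^z = \Theta_{E_{21}}^z \circ \widetilde{\varphi}_\sigma$ lifts to an honest module isomorphism $D_f^z(N^\sigma)\simeq D_{E_{21}}^z(N)^\sigma$. This amounts to carefully identifying the tensor products $U(\mathfrak{gl}_n)_{(f)}\otimes_{U(\mathfrak{gl}_n)} N^\sigma$ and $U(\mathfrak{gl}_n)_{(E_{21})}\otimes_{U(\mathfrak{gl}_n)} N$ through $\widetilde{\varphi}_\sigma$ and tracking the twisted actions on both sides. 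Once this bookkeeping is in place, the three statements follow at once from the corresponding known results for $\sigma=\id$.
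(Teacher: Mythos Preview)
Your proposal is correct and follows essentially the same strategy as the paper: both reduce to the known $E_{21}$ case from \cite[\S 5.1]{FMR21} by observing that the action of $f=E_{\sigma^{-1}(2),\sigma^{-1}(1)}$ on $M=N^\sigma$ coincides with the action of $E_{21}$ on $N$. The only difference is one of packaging: you phrase the transfer as an abstract commutation $D_f^z\circ(-)^\sigma\simeq(-)^\sigma\circ D_{E_{21}}^z$ of functors, whereas the paper works directly with the explicit tableau realization (noting $f(T(u))=T(u-\delta^{11})$ and reading off $D_f(M)\simeq(V_{\widetilde{\mathcal G}}(T(v)))^\sigma$, $D_f^z(M)\simeq(V_{\widetilde{\mathcal G}}(T(v+z\delta^{11})))^\sigma$ from \cite[Lemma~5.2, Theorem~5.4, Corollary~5.5]{FMR21}); your functorial formulation makes the bookkeeping you flag as the ``main obstacle'' transparent, while the paper's version yields the concrete tableau description immediately.
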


\begin{proof}
By hypothesis, there is a tableau $T(v)$ such that  $M\simeq (V_{\mathcal{G}}(T(v)))^{\sigma}$, where $\mathcal{G}=G(T(v))$. As the action of $\g$ on $M$ is twisted by $\sigma$, for any $T(u) \in \mathcal{B}_{\mathcal{G}}(T(v))$ we have $f (T(u)) =T(u-\delta^{11})$ (see (\ref{GT-formulas})). The injectivity of $f$ on $M$ implies that $v_{11}-v_{21}, v_{11}-v_{22}\notin \mathbb{Z}_{> 0}$ (see \cite[Lemma 5.1]{FMR21}). Hence, $D_f(M) \simeq (V_{ \widetilde{\mathcal{G}}}(T(v)))^{\sigma}$, where $\widetilde{ \mathcal{G}}$ is the graph obtained from $\mathcal{G}$ by removing the arrows from the second row to the first row (see \cite[Lemma 5.2]{FMR21}).   The first statement follows from the fact that $T_f(M)$ is a quotient of $\sigma$-relation modules. The second statement follows from \cite[Theorem 5.4]{FMR21}; indeed, $D_f^z(M) \simeq (V_{ \widetilde{\mathcal{G}}}(T(v+z\delta^{11})))^{\sigma}$. The third statement follows from \cite[Corolary 5.5 (a)]{FMR21}.  

\end{proof}

\begin{theorem}\label{Th:hw-localization}
 Let $\lambda=(\lambda_1,\dots, \lambda_n)$ be a $\sigma$-relation $\mathfrak{gl}_n$-weight, such that  $s=\sigma^{-1}(1)<\sigma^{-1}(2)=r$, $f=E_{rs}$, and assume that $\lambda_s-\lambda_r \notin \mathbb{Z}_{\geq 0}$. Then 
 
\begin{itemize}
 \item[(i)] $T_f(L(\lambda))$  is a simple $\sigma$-relation $\mathfrak{gl}_n$-module.

 \item[(ii)] $D_f^z(L(\lambda))$ is a $\sigma$-relation $\mathfrak{gl}_n$-module for any $z\in \mathbb{C}$. Moreover, $D_f^z(L(\lambda))$ is simple if and only if $z \notin \mathbb{Z}$ and $z+\lambda_s-\lambda_r \notin \mathbb{Z}$.
 \end{itemize}
\end{theorem}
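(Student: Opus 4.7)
The plan is to reduce the theorem to an application of Theorem~\ref{twistedErs} by realizing $L(\lambda)$ as a twisted relation module. By Theorem~\ref{main theorem}, $L(\lambda)\simeq [V_{\mathcal{G}}(T(v))]^{\sigma}$, where $T(v):=T_{\sigma}(X)$, $X=\sigma^{-1}(\lambda+\wp)$, and $\mathcal{G}=G(T(v))$ is a relation graph. Since $\widetilde{\varphi}_{\sigma}(E_{rs})=E_{\sigma(r)\sigma(s)}=E_{2,1}$, the action of $f$ on $L(\lambda)$ corresponds, after untwisting, to that of $E_{2,1}$ on the relation module $V_{\mathcal{G}}(T(v))$. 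Once the injectivity of $E_{2,1}$ on $V_{\mathcal{G}}(T(v))$ is established, Theorem~\ref{twistedErs} immediately yields that $T_f(L(\lambda))$ and $D_f^z(L(\lambda))$ are $\sigma$-relation modules, together with the explicit realization
\[ D_f^z(L(\lambda))\simeq \bigl[V_{\widetilde{\mathcal G}}\bigl(T(v+z\delta^{11})\bigr)\bigr]^{\sigma}, \]
where $\widetilde{\mathcal G}$ is obtained from $\mathcal G$ by deleting all arrows from the second row ending at $(1,1)$.

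I verify the injectivity of $E_{2,1}$ using the criterion of \cite[Lemma~5.1]{FMR21}: $v_{11}-v_{21},\,v_{11}-v_{22}\notin \mathbb Z_{>0}$. By Lemma~\ref{y_ij}, $v_{11}-v_{21}=\tfrac{1}{2}\bigl(1-A_{(2,1)}(G_{\sigma})\bigr)$; since $s=\sigma^{-1}(1)<\sigma^{-1}(2)=r$, Theorem~\ref{s-act-Gh} gives $A_{(2,1)}(G_{\sigma})=1$, so $v_{11}-v_{21}=0$. For the second difference, the proof of Theorem~\ref{graph fpr diferent cartan} shows that the weight components of $T(v)$ satisfy $\omega_{k}(T(v))=\lambda_{\sigma^{-1}(k)}$. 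Evaluating at $k=1,2$, using $\omega_1=v_{11}$ and $\omega_2=1+v_{21}+v_{22}-v_{11}$ together with $v_{21}=v_{11}$, gives $v_{11}=\lambda_s$ and $v_{22}=\lambda_r-1$; hence $v_{11}-v_{22}=\lambda_s-\lambda_r+1$. The standing hypothesis $\lambda_s-\lambda_r\notin\mathbb Z_{\geq 0}$ is precisely the statement that $v_{11}-v_{22}\notin\mathbb Z_{>0}$.

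For the simplicity assertion of (ii) I invoke the standard criterion: a relation module $V_{H}(T(w))$ is simple if and only if $H=G(T(w))$. The only arrows that could appear in $G(T(v+z\delta^{11}))$ but not in $\widetilde{\mathcal G}$ are those joining $(1,1)$ to a second-row vertex, and they are governed by the differences $(v_{11}+z)-v_{21}=z$ and $(v_{11}+z)-v_{22}=z+\lambda_s-\lambda_r+1$. Both differences are non-integer exactly when $z\notin\mathbb Z$ and $z+\lambda_s-\lambda_r\notin\mathbb Z$, which gives the simplicity criterion of (ii).

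Finally, for (i), $T_f(L(\lambda))=D_f(L(\lambda))/L(\lambda)$ is identified, via the realization above with $z=0$, with the quotient of $V_{\widetilde{\mathcal G}}(T(v))$ by the submodule spanned by those $\widetilde{\mathcal G}$-realizations $T(v+k\delta^{11})$ whose $(1,1)$-entry still respects the arrows of $\mathcal G$ into $(1,1)$ that were deleted; a direct computation with the Gelfand-Tsetlin action shows this quotient is isomorphic to the $\sigma$-relation module attached to the graph obtained from $\widetilde{\mathcal G}$ by replacing each deleted arrow by its reverse. I expect the main obstacle of the proof to be this last step in the subcase $\lambda_s-\lambda_r\in\mathbb Z_{<0}$, where $\mathcal G$ carries both arrows $(2,1)\to(1,1)$ and $(2,2)\to(1,1)$: one must check that after their reversal the resulting graph is still cross-less and satisfies the $\Diamond$-condition required by Theorem~\ref{relation mod}, so that it indeed defines a simple relation module realizing $T_f(L(\lambda))$.
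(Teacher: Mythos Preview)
Your overall approach is the same as the paper's: realize $L(\lambda)$ via Theorem~\ref{main theorem}, check injectivity of $E_{21}$ on the untwisted module (you do this more explicitly than the paper, computing $v_{11}-v_{21}=0$ and $v_{11}-v_{22}=\lambda_s-\lambda_r+1$; the paper just invokes $A_{(2,1)}(G_\sigma)=1$ together with \cite[Lemma~5.1]{FMR21}), apply Theorem~\ref{twistedErs}, and use the criterion $G=G(T(\cdot))$ for simplicity. Your treatment of~(ii) matches the paper's.

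For~(i) the paper is terser and slightly different in formulation: it simply writes $T_f(L(\lambda))\simeq[V_{G_1}(T(Y+\delta^{11}))]^\sigma$ with $G_1:=G(T(Y+\delta^{11}))$, after which simplicity is immediate from the criterion. Your description of the relevant graph as ``$\widetilde{\mathcal G}$ with each deleted arrow reversed'' is not the same object in general: when $\lambda_s-\lambda_r\leq -2$ one has $(v_{11}+1)-v_{22}=\lambda_s-\lambda_r+2\leq 0$, so in $G(T(v+\delta^{11}))$ the arrow between $(2,2)$ and $(1,1)$ still points downward rather than reversing. Hence the $\Diamond$-condition check you flag as the main obstacle would be carried out for the wrong graph. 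Adopting the paper's choice---take the associated graph $G(T(v+\delta^{11}))$ of the shifted tableau directly---sidesteps this issue and makes the obstacle you anticipate disappear.
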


\begin{proof}
By hypothesis $L(\lambda)\simeq (V_{\mathcal{G}}(T(Y)))^{\sigma}$, where $T(Y)=T_{\sigma}(\bar{\lambda})$ and $\mathcal{G}=G(T(Y))$. As $s=\sigma^{-1}(1)<\sigma^{-1}(2)=r$, we have $A_{(2,1)}(G_\sigma)=1$ (see Theorem \ref{s-act-Gh}) therefore, $f$ acts injectively on $L(\lambda)$ (see \cite[Lemma 5.1]{FMR21}), and does not act surjectively since $\lambda_s-\lambda_r \notin \mathbb{Z}_{\geq 0}$. 
Thus, $T_f(L(\lambda))$ and $D_f^z(L(\lambda))$ are $\sigma$-relation $\mathfrak{gl}_n$-modules by Theorem \ref{twistedErs}. 
Now, $T_f(L(\lambda))$ is simple as $T_f(L(\lambda)) \simeq V_{G_1}(T(Y+\delta^{11}))^{\sigma}$, where $G_1=G(T(Y+\delta^{11})) $. Finally, as $z \notin \mathbb{Z}$ is such that $z+\lambda_s-\lambda_r \notin \mathbb{Z}$, then $D_f^z(L(\lambda)) \simeq V_{G_2}(T(Y+z\delta^{11}))^{\sigma} $, where $G_2=G(T(Y+z\delta^{11})) $, which implies the simplicity of $D_f^z(L(\lambda))$.
\end{proof}

\appendix\section{ }\label{appendix}

In this appendix, we provide the proofs of Lemma \ref{Lemma: sk and tableaux} and Lemma \ref{PropTech03}.

\begin{lemma}\label{cor: sum of arrows} Set  $G\in \tilde{\Sigma}$, and $1\leq r< j< i$.
\begin{itemize}
    \item[(i)] $A_{(j,r)}(G) = 1$, and $A_{(i,r)}(G) = -1$ imply that $A_{(i,j)}(G) = -1$.
    \item[(ii)]  $A_{(j,r)}(G) = -1$, and $A_{(i,r)}(G) = 1$ imply that $A_{(i,j)}(G) = 1$.
    \item[(iii)]  If  $A_{(i,r)}(G) = 1$, and $A_{(i,j)}(G) = -1$, then $A_{(j,r)}(G) = 1$.
     \item[(iv)]  If  $A_{(i,r)}(G) =- 1$, and $A_{(i,j)}(G) = 1$, then $A_{(j,r)}(G) = -1$.
\end{itemize}
\end{lemma}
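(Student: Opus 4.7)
The plan is to reduce each of the four statements to elementary inequalities on $\sigma^{-1}$ via Theorem \ref{s-act-Gh}, since $G \in \tilde{\Sigma}$ means $G = G_\sigma$ for some $\sigma \in S_n$. Recall that Theorem \ref{s-act-Gh} says $A_{(r,s)}(G_\sigma) = 1$ iff $\sigma^{-1}(r) > \sigma^{-1}(s)$, and $A_{(r,s)}(G_\sigma) = -1$ iff $\sigma^{-1}(r) < \sigma^{-1}(s)$. So every hypothesis and every conclusion in the lemma is just a comparison between two values of $\sigma^{-1}$, and the conclusions will follow by transitivity of $<$.

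Concretely, in item (i), writing $a = \sigma^{-1}(r)$, $b = \sigma^{-1}(j)$, $c = \sigma^{-1}(i)$, the hypothesis $A_{(j,r)}(G) = 1$ translates to $b > a$ and $A_{(i,r)}(G) = -1$ translates to $c < a$. Hence $c < a < b$, so $c < b$, which by Theorem \ref{s-act-Gh} (since $i > j$) gives exactly $A_{(i,j)}(G) = -1$. Items (ii)--(iv) are analogous: each one amounts to a chain of two strict inequalities whose transitive conclusion reads off as the claimed value of $A$. I would simply lay out the four chains side by side.

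The main (very small) obstacle is just bookkeeping: the function $A_{(r,s)}$ is defined only for $r > s$, while the roots involved have various orderings, so one must be careful to apply Theorem \ref{s-act-Gh} with the indices in the correct order (namely $i > j > r$ throughout). Once that bookkeeping is set up, each of the four implications is a one-line transitivity argument, and no use of the $\Diamond$-condition, of closedness of $\mathcal{Q}_G$, or of the graph structure beyond Theorem \ref{s-act-Gh} is needed. As an alternative (slightly more conceptual) route, one could instead appeal to Lemma \ref{PropTech02} which tells us that $\mathcal{Q}_G$ is a closed partition of $\Delta$, and use the identity $\alpha_{ir} + \alpha_{rj} = \alpha_{ij}$ (and its negatives) together with closedness to derive each of (i)--(iv); but the direct approach via $\sigma^{-1}$ is shorter and I would go with that.
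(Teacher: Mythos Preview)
Your argument is correct. Interestingly, the alternative route you sketch at the end is exactly the one the paper takes: it invokes the closedness of $\mathcal{Q}_G$ (via Proposition \ref{PropTech01} / Lemma \ref{PropTech02}) and, for item (i), notes that $A_{(j,r)}(G)=1$ and $A_{(i,r)}(G)=-1$ give $\alpha_{r,j},\alpha_{i,r}\in\mathcal{Q}_G$, whence $\alpha_{i,j}=\alpha_{r,j}+\alpha_{i,r}\in\mathcal{Q}_G$ and so $A_{(i,j)}(G)=-1$. Your direct approach through Theorem \ref{s-act-Gh} is more elementary and arguably cleaner, since it bypasses the root-system translation entirely and reduces everything to transitivity of $<$ on the values $\sigma^{-1}(r),\sigma^{-1}(j),\sigma^{-1}(i)$; the paper's approach is slightly more conceptual in that it would work verbatim for any closed partition, without needing to know it is of the form $\sigma(\Delta^+)$. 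Both proofs are one-liners once set up, and your bookkeeping remark about the ordering $i>j>r$ in the domain of $A$ is the only real thing to watch.
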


\begin{proof} We provide a proof only for item (i) since (ii), (iii), and (iv) are analogous.
The conditions imposed on $G$ imply that $\alpha_{r,j}, \alpha_{i,r} \in \mathcal{Q}_G$. Since $\mathcal{Q}_G$ is a closed partition, $\alpha_{i,j}=\alpha_{r,j}+\alpha_{i,r}\in \mathcal{Q}_G$, and so $A_{(i,j)}(G) = -1$.
\end{proof}

\begin{lemma}\label{lem: Mk, mk different}
Let $G=G_{\sigma}$ for some $\sigma\in S_n$, and $k\leq n-1$.
\begin{itemize}
\item[(i)] If $A_{(k+1,k)}(G)=1$, then:
    \begin{itemize}
\item[(a)] $A_{(k+1,\ell)}(G)\neq A_{(k,\ell)}(G)$ $\iff$  $1\leq \ell<k$,  and  $\sigma^{-1}(k)<\sigma^{-1}(\ell)<\sigma^{-1}(k+1)$.
\item[(b)] $A_{(\ell,k)}(G)\neq A_{(\ell, k+1)}(G)$ $\iff$ $k+1 <\ell\leq n$, and  $\sigma^{-1}(k)<\sigma^{-1}(\ell)<\sigma^{-1}(k+1)$. 
    \end{itemize}
\item[(ii)] If $A_{(k+1,k)}(G)=-1$, then:
\begin{itemize}
\item[(a)] $A_{(k+1,\ell)}(G)\neq A_{(k,\ell)}(G)$ $\iff$ $1\leq \ell<k$,  and  $\sigma^{-1}(k+1)<\sigma^{-1}(\ell)<\sigma^{-1}(k)$.
\item[(b)] $A_{(\ell,k)}(G)\neq A_{(\ell, k+1)}(G)$ $\iff$ $\ell\geq k+2$, and  $\sigma^{-1}(k+1)<\sigma^{-1}(\ell)<\sigma^{-1}(k)$. 
\end{itemize}
\end{itemize}
\end{lemma}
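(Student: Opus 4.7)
The plan is to obtain all four statements as direct consequences of Theorem \ref{s-act-Gh}, which converts every sign $A_{(r,s)}(G_\sigma)$ into a single comparison between $\sigma^{-1}(r)$ and $\sigma^{-1}(s)$. After this translation, both the hypothesis $A_{(k+1,k)}(G)=\pm 1$ and each of the four inequality-of-signs conditions become elementary statements about the relative order of the three values $\sigma^{-1}(k)$, $\sigma^{-1}(k+1)$ and $\sigma^{-1}(\ell)$.

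I will begin with case (i), where the hypothesis translates to $\sigma^{-1}(k)<\sigma^{-1}(k+1)$. For part (a), I will first observe that both $A_{(k+1,\ell)}$ and $A_{(k,\ell)}$ are defined precisely when $1\leq \ell<k$, so this part of the right-hand side is forced by the left-hand side. Applying Theorem \ref{s-act-Gh} a second time, I will rewrite
$$A_{(k+1,\ell)}(G)=1 \iff \sigma^{-1}(\ell)<\sigma^{-1}(k+1), \qquad A_{(k,\ell)}(G)=1 \iff \sigma^{-1}(\ell)<\sigma^{-1}(k),$$
and note that these two signs disagree exactly when $\sigma^{-1}(\ell)$ lies strictly between $\sigma^{-1}(k)$ and $\sigma^{-1}(k+1)$, which under the hypothesis reads $\sigma^{-1}(k)<\sigma^{-1}(\ell)<\sigma^{-1}(k+1)$. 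Part (b) will follow from a symmetric computation: for $\ell>k+1$, the same theorem turns $A_{(\ell,k)}$ and $A_{(\ell,k+1)}$ into comparisons of $\sigma^{-1}(\ell)$ against $\sigma^{-1}(k)$ and against $\sigma^{-1}(k+1)$, and the two signs disagree precisely when $\sigma^{-1}(\ell)$ strictly separates them. Case (ii) will then be obtained by interchanging the roles of $k$ and $k+1$ in the same argument, since its hypothesis now reads $\sigma^{-1}(k+1)<\sigma^{-1}(k)$; the chain of equivalences delivers the "between" conditions with the reversed ordering.

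There is no substantive obstacle in this proof; the only care required will be to identify the implicit domain of each $A_{(r,s)}$ correctly, and to invoke injectivity of $\sigma^{-1}$ (so that $\sigma^{-1}(\ell)$ differs from both $\sigma^{-1}(k)$ and $\sigma^{-1}(k+1)$) when upgrading the non-strict inequalities produced by negating $\sigma^{-1}(\ell)<\sigma^{-1}(k)$ to their strict versions.
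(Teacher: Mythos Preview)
Your proposal is correct and follows essentially the same route as the paper: both arguments reduce everything to Theorem~\ref{s-act-Gh} and read off the result from the relative order of $\sigma^{-1}(k)$, $\sigma^{-1}(k+1)$, and $\sigma^{-1}(\ell)$. The only cosmetic difference is that the paper, in the converse direction, invokes Lemma~\ref{cor: sum of arrows} to exclude the sign pattern $A_{(k,\ell)}(G)=1$, $A_{(k+1,\ell)}(G)=-1$ via the closed-partition property, whereas you bypass this by arguing the trichotomy on $\sigma^{-1}(\ell)$ directly; your version is marginally more streamlined but not a different idea.
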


\begin{proof} We prove (i)(a) and (ii)(a), since the proofs of the other cases are analogous.

\textbf{Case 1: $A_{(k+1,k)}(G)=1$.}\\ 
 If $\sigma^{-1}(k)<\sigma^{-1}(\ell)<\sigma^{-1}(k+1)$, by Theorem \ref{s-act-Gh} we have $A_{(k, \ell)}(G)=-1$ and $A_{(k+1,\ell)}(G)=1$. Conversely, the case $A_{(k, \ell)}(G)=1$ and $A_{(k+1,\ell)}(G)=-1$, together with Lemma \ref{cor: sum of arrows}(i), imply $A_{(k+1,k)}(G)=-1$, which is a contradiction. Finally, note that $\sigma^{-1}(\ell)<\sigma^{-1}(k)$ implies $A_{(k, \ell)}(G)=1$, and $\sigma^{-1}(\ell)>\sigma^{-1}(k+1)$, implies that  
 $A_{(k+1,\ell)}(G)=-1$. Therefore, $\sigma^{-1}(k)<\sigma^{-1}(\ell)<\sigma^{-1}(k+1)$.

\textbf{Case 2: $A_{(k+1,k)}(G)=-1$.} \\ 
 If $\sigma^{-1}(k+1)<\sigma^{-1}(\ell)<\sigma^{-1}(k)$, by Theorem \ref{s-act-Gh} we have $A_{(k+1, \ell)}(G)=-1$, and $A_{(k,\ell)}(G)=1$. Conversely, the case $A_{(k+1, \ell)}(G)=1$, and $A_{(k,\ell)}(G)=-1$, together with Lemma \ref{cor: sum of arrows} (ii) implies $A_{(k+1,k)}(G)=1$, which is a contradiction. Now suppose that $\sigma^{-1}(\ell)<\sigma^{-1}(k+1)$, then $A_{(k+1, \ell)}(G)=1$, by Theorem \ref{s-act-Gh}, but this is a contradiction, hence $\sigma^{-1}(k+1)<\sigma^{-1}(\ell)$.  Finally, note that $\sigma^{-1}(\ell)<\sigma^{-1}(k+1)$ implies $A_{(k+1, \ell)}(G)=1$, and $\sigma^{-1}(\ell)>\sigma^{-1}(k)$, implies that  
 $A_{(k,\ell)}(G)=-1$. Therefore, $\sigma^{-1}(k+1)<\sigma^{-1}(\ell)<\sigma^{-1}(k)$.
\end{proof}
 \begin{definition}
 Given $G\in \tilde{\Sigma}$ and $k\in \{1, \dots, n-1\}$ define:
 \begin{align*}
  M_{k}(G)&:=A_{(k+1,k)}(G) \cdot \#\{\ell\ |\ k+2\leq \ell\leq  n,\text{ and } A_{(\ell,k)}(G)\neq A_{(\ell, k+1)}(G)\},\\
 m_{k}(G)&:=A_{(k+1,k)}(G) \cdot \#\{\ell\ |\ 1\leq \ell\leq k-1,\text{ and } A_{(k+1,\ell)}(G)\neq A_{(k,\ell)}(G)\}.
 \end{align*}
 \end{definition}

\begin{lemma}\label{Lemma: dual}
    Set $X\in\mathbb{C}^n$, $\sigma\in S_n$,  $T(Y):=T_{\sigma}(X)$, and $1\leq k\leq n-1$.
\begin{itemize}
    \item [(i)] $M_{k}(G_\sigma) =\displaystyle \frac{1}{2} \sum_{i=k+2}^n(A_{(i,k)}(G_{\sigma}) - A_{(i,k+1)}(G_{\sigma}))$;
    \item [(ii)] $m_{k}(G_\sigma) = \displaystyle \frac{1}{2} \sum_{i=1}^{k-1}(A_{(k+1,i)}(G_{\sigma}) - A_{(k,i)}(G_{\sigma}) )$;
    \item [(iii)] $M_{k}(G_\sigma) +m_{k}(G_\sigma)+ A_{(k+1,k)}(G_\sigma)= \sigma^{-1}(k+1)-\sigma^{-1}(k)$.
    \item[(iv)] $y_{k+1,k+1} - y_{k+1,k} = y_{n,k+1} - y_{n,k} + M_{k}(G_\sigma).$
\end{itemize}
    
\end{lemma}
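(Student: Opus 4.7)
The plan is to prove the four parts in the given order, with parts (i) and (ii) being completely parallel and part (iv) following immediately from (i) together with Lemma \ref{y_ij}.

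For part (i), I would look at each summand $A_{(i,k)}(G_\sigma) - A_{(i,k+1)}(G_\sigma)$: it equals $0$ when the two arrow orientations agree and $\pm 2$ otherwise. So the right-hand side counts the indices $\ell$ in $\{k+2,\ldots,n\}$ with $A_{(\ell,k)}\neq A_{(\ell,k+1)}$, weighted by the sign $\tfrac{1}{2}(A_{(\ell,k)} - A_{(\ell,k+1)})$. The key observation is that this sign is constant over such $\ell$ and equals $A_{(k+1,k)}(G_\sigma)$. Indeed, if $A_{(k+1,k)}(G_\sigma) = 1$ then Lemma \ref{lem: Mk, mk different}(i)(b) forces $\sigma^{-1}(k)<\sigma^{-1}(\ell)<\sigma^{-1}(k+1)$, and Theorem \ref{s-act-Gh} then gives $A_{(\ell,k)}=1$, $A_{(\ell,k+1)}=-1$, so the signed contribution is $+1$. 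The case $A_{(k+1,k)}(G_\sigma)=-1$ is analogous via Lemma \ref{lem: Mk, mk different}(ii)(b). Summing over all contributing $\ell$ reproduces the definition of $M_k(G_\sigma)$. The proof of (ii) is the same argument applied to Lemma \ref{lem: Mk, mk different}(i)(a) and (ii)(a).

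For part (iii), combine the interpretations of $M_k$ and $m_k$ through Lemma \ref{lem: Mk, mk different}: in the case $A_{(k+1,k)}(G_\sigma)=1$, together $M_k+m_k$ counts the cardinality of $\{\ell\in\{1,\ldots,n\}\setminus\{k,k+1\}\ |\ \sigma^{-1}(k)<\sigma^{-1}(\ell)<\sigma^{-1}(k+1)\}$, which equals $\sigma^{-1}(k+1)-\sigma^{-1}(k)-1$; adding $A_{(k+1,k)}(G_\sigma)=1$ yields $\sigma^{-1}(k+1)-\sigma^{-1}(k)$. The case $A_{(k+1,k)}(G_\sigma)=-1$ is symmetric: both $M_k$ and $m_k$ now pick up the overall sign $-1$, so $M_k+m_k = -(\sigma^{-1}(k)-\sigma^{-1}(k+1)-1)$, and adding $A_{(k+1,k)}(G_\sigma)=-1$ again produces $\sigma^{-1}(k+1)-\sigma^{-1}(k)$.

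For part (iv), apply Lemma \ref{y_ij} directly to both $y_{k+1,k+1}$ and $y_{k+1,k}$ and subtract:
\begin{equation*}
y_{k+1,k+1}-y_{k+1,k} = y_{n,k+1}-y_{n,k} + \tfrac{1}{2}\sum_{\ell=k+2}^{n}\bigl(A_{(\ell,k)}(G_\sigma)-A_{(\ell,k+1)}(G_\sigma)\bigr),
\end{equation*}
and invoke part (i) to rewrite the second sum as $M_k(G_\sigma)$.

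The only delicate step is the case analysis on the sign of $A_{(k+1,k)}(G_\sigma)$ in parts (i) and (iii); getting the direction of the inequality $\sigma^{-1}(k)\lessgtr\sigma^{-1}(k+1)$ to match the correct orientation convention is the main source of sign errors, but everything else is a direct unfolding of the definitions.
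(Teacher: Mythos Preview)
Your proposal is correct and follows essentially the same approach as the paper: a case split on the sign of $A_{(k+1,k)}(G_\sigma)$, the use of Theorem~\ref{s-act-Gh} to pin down each $A_{(\ell,k)}$ and $A_{(\ell,k+1)}$, the counting argument for (iii), and the direct application of Lemma~\ref{y_ij} for (iv). The only cosmetic difference is that for (i) you invoke Lemma~\ref{lem: Mk, mk different} to characterise when $A_{(\ell,k)}\neq A_{(\ell,k+1)}$, whereas the paper re-derives that characterisation inline via a three-case split on the position of $\sigma^{-1}(\ell)$; the content is identical.
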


\begin{proof}
We provide a proof of the lemma under the assumption that $A_{(k+1,k)}(G_\sigma)=1$, since the case $A_{(k+1,k)}(G_\sigma)=-1$ is analogous. In this case, Theorem \ref{s-act-Gh} implies that  $\sigma^{-1}(k)<\sigma^{-1}(k+1)$, and for $\ell \notin \{ k, k+1 \}$, we have three possibilities: (a) $\sigma^{-1}(\ell)<\sigma^{-1}(k)<\sigma^{-1}(k+1)$; (b) $\sigma^{-1}(k)<\sigma^{-1}(\ell)<\sigma^{-1}(k+1)$; (c) $\sigma^{-1}(k)<\sigma^{-1}(k+1)<\sigma^{-1}(\ell)$.
To prove item (i) we consider $\ell \geq k+2$, and use Theorem \ref{s-act-Gh} in a case by case consideration to obtain that $A_{(\ell,k)}(G_{\sigma}) =A_{(\ell,k+1)}(G_{\sigma})=-1$, in case (a), $A_{(\ell,k)}(G_{\sigma}) =1$  and $A_{(\ell,k+1)}(G_{\sigma})=-1$, in case (b), and $A_{(\ell,k)}(G_{\sigma}) =A_{(\ell,k+1)}(G_{\sigma})=1$ in case of (c). Then for any $\ell\geq k+2$, we have $A_{(\ell,k)}(G_{\sigma}) - A_{(\ell,k+1)}(G_{\sigma})=2$ whenever $A_{(\ell,k)}(G_{\sigma})\neq A_{(\ell,k+1)}(G_{\sigma})$. In the case $A_{(k+1,k)}(G_\sigma)=-1$ we obtain $A_{(\ell,k)}(G_{\sigma}) - A_{(\ell,k+1)}(G_{\sigma})=-2$ whenever $A_{(\ell,k)}(G_{\sigma})\neq A_{(\ell,k+1)}(G_{\sigma})$, so the formula follows after multiplying by $A_{(k+1,k)}(G_\sigma)$. The prove of (ii) uses analogous arguments.

For (iii), by Lemma \ref{lem: Mk, mk different} (i), we get $M_{k}(G_\sigma)=\#\{\ell\geq k+2\ |\ \sigma^{-1}(k)<\sigma^{-1}(\ell)<\sigma^{-1}(k+1)\}$, and $m_{k}(G_\sigma)=\#\{\ell\leq k-1\ |\ \sigma^{-1}(k)<\sigma^{-1}(\ell)<\sigma^{-1}(k+1)\}$.

To prove (iv) we use Lemma \ref{y_ij}, and item (i) to obtain
\begin{align*}
     y_{k+1,k+1} - y_{k+1,k} &=\left(y_{n,k+1}  + \frac{1}{2}\sum_{i=k+2}^n (1-A_{(i,k+1)}(G_{\sigma
}))\right)-\left(y_{n,k}  + \frac{1}{2}\sum_{i=k+2}^n(1- A_{(i,k)}(G_{\sigma
}) )\right) \\
     &= y_{n,k+1} - y_{n,k} + \frac{1}{2}\sum_{i=k+2}^n(A_{(i,k)}(G_{\sigma}) - A_{(i,k+1)}(G_{\sigma}))\\
     &=y_{n,k+1} - y_{n,k} + M_{k}(G_\sigma).
\end{align*}
\end{proof}

\subsection{Proof of Lemma \ref{Lemma: sk and tableaux}} Recall that for any tableau $T(R)$, $\Sigma_{t}(R)$ denotes the sum of the entries in row $t$ of $T(R)$.
\begin{lemma*}Let $T(R)=T_{\tau}(X)$ and $T(W)=T_{s_k\circ\tau}(X)$ respectively, for some $\tau\in S_n$, and a simple transposition $s_k$.

\begin{itemize}
\item[(i)] $\Sigma_{i}(W)=\Sigma_{i}(R)$ for $i\neq k$.

\item[(ii)] $\Sigma_{k}(W)=\Sigma_{k}(R)+ r_{n,k+1}- r_{n,k} + \tau^{-1}(k+1)-\tau^{-1}(k)$.
\end{itemize}

\end{lemma*}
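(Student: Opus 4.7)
The plan is to apply the closed-form expression from Lemma~\ref{y_ij}, namely
\[
r_{ij} = x_j + \tfrac{1}{2}\sum_{\ell=i+1}^{n}\bigl(1-A_{(\ell,j)}(G_\tau)\bigr),
\]
and the analogous formula for $w_{ij}$ with $X$ replaced by $s_k(X)$ and $G_\tau$ replaced by $G_{s_k\tau}=s_k(G_\tau)$ (cf.~Definition~\ref{permutation of a graph}). The differences $w_{ij}-r_{ij}$ then split into a top-row shift, supported only on columns $j\in\{k,k+1\}$, plus a shift in the $A$-sums whose per-$\ell$ changes are prescribed explicitly by Corollary~\ref{co: s-act-Gh}. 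In every case most contributions cancel, leaving exactly what the lemma asserts.

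For part (i) with $i<k$, both the top-row entries $x_j$ with $j\leq i<k$ and the values $A_{(\ell,j)}$ with $j,\ell\notin\{k,k+1\}$ are preserved; hence only $\ell\in\{k,k+1\}$ contribute, and Corollary~\ref{co: s-act-Gh}(iv)--(v) interchanges these two values for $j\leq k-1$, so they cancel in pairs and $w_{ij}=r_{ij}$ for every $j\leq i$. For $i>k$ the range of $\ell$ lies in $\{k+2,\dots,n\}$; if $j\notin\{k,k+1\}$ nothing changes, while for the pair $j\in\{k,k+1\}$ the simultaneous swap $x_k\leftrightarrow x_{k+1}$ and $A_{(\ell,k)}\leftrightarrow A_{(\ell,k+1)}$ from Corollary~\ref{co: s-act-Gh}(ii)--(iii) preserves the sum $w_{i,k}+w_{i,k+1}=r_{i,k}+r_{i,k+1}$. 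Either way $\Sigma_i(W)=\Sigma_i(R)$.

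For part (ii) we compute $\Sigma_k(W)-\Sigma_k(R)=\sum_{j=1}^{k}(w_{k,j}-r_{k,j})$. For $j<k$ only $\ell=k+1$ contributes (by Corollary~\ref{co: s-act-Gh}(iv)), giving $\tfrac{1}{2}\bigl(A_{(k+1,j)}(G_\tau)-A_{(k,j)}(G_\tau)\bigr)$; summing over $j<k$ yields $m_k(G_\tau)$ by Lemma~\ref{Lemma: dual}(ii). For $j=k$ one collects three pieces: the top-row shift $x_{k+1}-x_k=r_{n,k+1}-r_{n,k}$; the sign flip at $\ell=k+1$ from Corollary~\ref{co: s-act-Gh}(i), which contributes $A_{(k+1,k)}(G_\tau)$; and the column swap at $\ell\geq k+2$ from Corollary~\ref{co: s-act-Gh}(ii)--(iii), which contributes $\tfrac{1}{2}\sum_{\ell=k+2}^{n}\bigl(A_{(\ell,k)}(G_\tau)-A_{(\ell,k+1)}(G_\tau)\bigr)=M_k(G_\tau)$ by Lemma~\ref{Lemma: dual}(i). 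Finally Lemma~\ref{Lemma: dual}(iii) collapses $M_k(G_\tau)+m_k(G_\tau)+A_{(k+1,k)}(G_\tau)$ to $\tau^{-1}(k+1)-\tau^{-1}(k)$, as required.

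The main obstacle is purely the index bookkeeping: one must carefully separate the three ranges $\ell=k$, $\ell=k+1$, $\ell\geq k+2$ and match each against the appropriate item of Corollary~\ref{co: s-act-Gh}, and similarly separate $j<k$ from $j=k$ in part (ii). Once Lemma~\ref{y_ij} is in place and the pieces are recognized as $m_k$, $M_k$, and $A_{(k+1,k)}$, the identification with $\tau^{-1}(k+1)-\tau^{-1}(k)$ via Lemma~\ref{Lemma: dual}(iii) is immediate.
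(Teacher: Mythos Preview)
Your proof is correct and follows essentially the same approach as the paper's own argument: both rely on the closed-form expression from Lemma~\ref{y_ij}, feed in the per-arrow changes from Corollary~\ref{co: s-act-Gh}, and then collapse the resulting sums via Lemma~\ref{Lemma: dual}(i)--(iii). The only cosmetic difference is that the paper computes $w_{k,j}$ outright (invoking Lemma~\ref{Lemma: dual}(iv) for $j=k$) and then subtracts, whereas you compute the differences $w_{k,j}-r_{k,j}$ directly; the bookkeeping is identical.
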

\begin{proof}
Item (i) follows from Corollary \ref{co: s-act-Gh}.  Indeed, $w_{ij}=r_{ij}$ whenever $j\notin\{k,k+1\}$, $i\neq k$; and $w_{i,k+1}=r_{ik}$,  $w_{i,k}=r_{i,k+1}$, whenever $i\geq k+1$. For item (ii), we compute explicitly $w_{k,j}$ for $j\leq k$.

\begin{itemize}
\item[(a)] Suppose first $1\leq j\leq k-1$. We use Corollary \ref{co: s-act-Gh} and Formula (\ref{y_ij explicit}) to get 
\begin{align*}
    w_{k,j} =& w_{n,j} + \frac{1}{2} \sum_{\ell = k+1}^{n}(1-A_{(\ell ,j)}(G_{s_k\circ\tau}))\\
    =& r_{n,j} + \frac{1}{2} \sum_{\ell = k+2}^{n}(1-A_{(\ell ,j)}(G_{\tau})) + \frac{1}{2}(1-A_{(k,j)}(G_{\tau}))\\
    =& r_{n,j} + \frac{1}{2} \sum_{\ell = k+1}^{n}(1-A_{(\ell ,j)}(G_{\tau})) + \frac{1}{2}(1-A_{(k,j)}(G_{\tau}))-\frac{1}{2}(1-A_{(k+1,j)}(G_{\tau}))\\
    =& r_{k,j} + \frac{1}{2}\left(A_{(k+1,j)}(G_{\tau}) - A_{(k,j)}(G_{\tau})\right).
\end{align*}
\item[(b)] To compute $w_{kk}$, we use Corollary \ref{co: s-act-Gh}, Lemma \ref{y_ij}, and Lemma \ref{Lemma: dual} (iv)  to get 
$$w_{kk}=r_{k,k}+r_{n,k+1} - r_{n,k} + M_{k}(G_\tau)+A_{(k+1,k)}(G_{\tau}).$$

\end{itemize}
 For item (ii), we use (a), (b), and Lemma \ref{Lemma: dual} (ii), (iii) to obtain that:
\begin{align*}
    \Sigma_{k}(W) &= \sum_{j=1}^{k} w_{kj}=\sum_{j=1}^{k-1} w_{kj}+w_{kk}\\
    & =\sum_{j=1}^{k-1} \left(r_{kj}+  \frac{1}{2}(A_{(k+1,j)}(G_{\tau}) - A_{(kj)}(G_{\tau})) \right)+w_{kk}\\
    & =\sum_{j=1}^{k-1} r_{kj}+m_{k}(G_\tau)+r_{k,k}+r_{n,k+1} - r_{n,k} + M_{k}(G_\tau)+A_{(k+1,j)}(G_{\tau})\\
     & =\Sigma_{k}(R)+r_{n,k+1} - r_{n,k}+m_{k}(G_\tau) + M_{k}(G_\tau)+A_{(k+1,j)}(G_{\tau})\\
     &=\Sigma_{k}(R)+ r_{n,k+1}- r_{n,k} + \tau^{-1}(k+1)-\tau^{-1}(k).
\end{align*}
The last equality follows from Lemma \ref{Lemma: dual} (iii).
\end{proof}

\subsection{Proof of Lemma \ref{PropTech03}}
\begin{lemma*} Set $\sigma\in S_n$, $X\in\mathbb{C}^n$, $G:=G_{\sigma}$, and $T(L):=T_{\sigma}(X)$. Given $r<s$, and $\{i_r,i_{r+1},\ldots,i_{s-1}\}$ with $1\leq i_t\leq t$ and $r\leq t <s$, we have:
\begin{itemize}
\item[(i)] If $A_{(s,r)}(G)=1$, then  $T(L+\delta^{r,i_r}+\ldots+\delta^{s-1,i_{s-1}})$ does not  satisfy  $G$.
\item[(ii)] If $A_{(s,r)}(G)=-1$, then $T(L-\delta^{r,i_r}-\ldots-\delta^{s-1,i_{s-1}})$ does not  satisfy $G$.
\end{itemize}

\end{lemma*}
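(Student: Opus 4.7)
The plan is to prove (i) by contradiction and derive (ii) by a completely dual argument. Suppose $T(L')$ satisfies $G$, with $L' := L + \delta^{r,i_r} + \cdots + \delta^{s-1,i_{s-1}}$, and set $\chi_{k,c} := [i_k = c]$ for $r \leq k \leq s-1$ (and $\chi_{k,c} := 0$ otherwise), so that $l'_{k,c} = l_{k,c} + \chi_{k,c}$. From Lemma \ref{y_ij} one has $l_{k-1,c} - l_{k,c} = \tfrac{1}{2}(1 - A_{(k,c)}(G_\sigma)) \in \{0,1\}$, and a direct inspection shows that $T(L')$ satisfies the arrow of $G_\sigma$ between $(k,c)$ and $(k-1,c)$ if and only if
\[
A_{(k,c)}(G_\sigma) \cdot \bigl(\chi_{k,c} - \chi_{k-1,c}\bigr) \;\geq\; 0.
\]

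Write $j_t := i_t$ for $r \leq t \leq s-1$. Applied at $(s,r)$ together with the hypothesis $A_{(s,r)}(G_\sigma) = 1$, the above inequality forces $j_{s-1} \neq r$; applied at $(s, j_{s-1})$ it forces $A_{(s,j_{s-1})}(G_\sigma) = -1$. For each intermediate $r+1 \leq k \leq s-1$ with $j_k \neq j_{k-1}$, inspection at column $j_{k-1}$ (which always supports an arrow, since $j_{k-1} \leq k-1 < k$) forces $A_{(k,j_{k-1})}(G_\sigma) = -1$, while at column $j_k$ it forces $A_{(k,j_k)}(G_\sigma) = 1$ whenever $j_k < k$; if $j_k = k$ no arrow constraint is available at that vertex, but one then has $\sigma^{-1}(j_k) = \sigma^{-1}(k)$ for free. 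Translating via Theorem \ref{s-act-Gh}, each step gives $\sigma^{-1}(j_k) \leq \sigma^{-1}(k) < \sigma^{-1}(j_{k-1})$, and telescoping yields $\sigma^{-1}(j_r) \geq \sigma^{-1}(j_{s-1})$.

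Combined with $\sigma^{-1}(j_{s-1}) > \sigma^{-1}(s)$ and the hypothesis $\sigma^{-1}(s) > \sigma^{-1}(r)$, this produces the chain
\[
\sigma^{-1}(j_r) \;\geq\; \sigma^{-1}(j_{s-1}) \;>\; \sigma^{-1}(s) \;>\; \sigma^{-1}(r).
\]
If $j_r = r$ this collapses to $\sigma^{-1}(r) > \sigma^{-1}(r)$, a contradiction; if $j_r < r$, the master inequality applied at $(r,j_r)$ (where $\chi_{r-1,j_r}=0$ and $\chi_{r,j_r}=1$) forces $A_{(r,j_r)}(G_\sigma) = 1$, i.e.\ $\sigma^{-1}(r) > \sigma^{-1}(j_r)$, again contradicting the chain. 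This closes (i). For (ii) we substitute $L' := L - \delta^{r,i_r} - \cdots - \delta^{s-1,i_{s-1}}$; the same computation produces the reversed inequality $A_{(k,c)}(G_\sigma)\bigl(\chi_{k-1,c}-\chi_{k,c}\bigr) \geq 0$, and the entire argument runs with all signs flipped, yielding the dual chain $\sigma^{-1}(j_r) \leq \sigma^{-1}(j_{s-1}) < \sigma^{-1}(s) < \sigma^{-1}(r)$ and the symmetric contradiction.

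The main technical hurdle is the first reduction: packaging the condition "$T(L')$ satisfies $G$" into the single compact inequality above, and observing that at every intermediate row $k$ only the two columns $j_{k-1}$ and $j_k$ produce nontrivial constraints (all other columns have $\chi_{k,c} = \chi_{k-1,c} = 0$). Once that bookkeeping is set up, Theorem \ref{s-act-Gh} converts every arrow orientation into a comparison of $\sigma^{-1}$-values, and the contradiction drops out from a single telescoping chain combined with one boundary constraint at row $r$.
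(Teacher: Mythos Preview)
Your argument is correct. The reduction to the single inequality $A_{(k,c)}(G_\sigma)\,(\chi_{k,c}-\chi_{k-1,c})\ge 0$ is valid (both cases $A=\pm1$ give exactly this after plugging in $l_{k-1,c}-l_{k,c}=\tfrac12(1-A_{(k,c)})$), and from it the chain $\sigma^{-1}(j_k)\le\sigma^{-1}(k)<\sigma^{-1}(j_{k-1})$ at every step where the column changes (with equality when it does not) telescopes cleanly to $\sigma^{-1}(j_r)\ge\sigma^{-1}(j_{s-1})>\sigma^{-1}(s)>\sigma^{-1}(r)$, which contradicts either $j_r=r$ directly or the boundary constraint at $(r,j_r)$ when $j_r<r$. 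The dual case (ii) is indeed verbatim with reversed signs.

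Your approach, however, differs from the paper's. The paper argues by induction on $s-r$: Step~1 handles the constant-column case $i_r=\cdots=i_{s-1}=i$ directly, and Step~2 splits an arbitrary shift at the last index $k_0$ where $i_{k_0}\neq i_{s-1}$, applying Step~1 to the top block and the induction hypothesis to the bottom block, with Lemma~\ref{cor: sum of arrows} (the closed-partition implications among arrow signs) providing the contradiction. You instead bypass both the induction and Lemma~\ref{cor: sum of arrows} entirely by translating every arrow constraint through Theorem~\ref{s-act-Gh} into an inequality among $\sigma^{-1}$-values; the closedness of the partition then appears implicitly as transitivity of the order on $\{1,\dots,n\}$, and the whole argument becomes a single monotone chain. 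Your route is shorter and makes the mechanism more transparent; the paper's route keeps the argument phrased purely in terms of arrow signs and reuses Lemma~\ref{cor: sum of arrows}, which is perhaps more in keeping with the graph-theoretic language used elsewhere.
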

\begin{proof}
The proof of (i) and (ii) is analogous, so we provide a detailed proof only for (i). The proof will be divided in two steps. Suppose $A_{(s,r)}(G)=1$.
\begin{enumerate}[Step $1.$]
\item $T(L+\delta^{r,i}+\ldots+\delta^{s-1,i})$ does not  satisfied $G$ for any $i\leq r$:\\
A direct verification shows that $A_{(r,i)}(G)=-1$, or $A_{(s,i)}(G)=1$ implies that $T(L+\delta^{r,i}+\ldots+\delta^{s-1,i})$ does not satisfy $G$. Let us assume that $A_{(r,i)}(G)=1$, and $A_{(s,i)}(G)=-1$. In this case, by Lemma \ref{cor: sum of arrows}(i) we should have $A_{(s,r)}(G)=-1$, which contradicts the first part of the hypothesis.
\item $T(L+\delta^{r,i_r}+\ldots+\delta^{s-1,i_{s-1}})$ does not  satisfied  $G$ for any $\{i_r,i_{r+1},\ldots,i_{s-1}\}$:\\
We proceed by induction on $t=s-r$. The case $t=1$ follows from Step $1$.
Suppose now that $t>1$, and the statement of the lemma is true for $k<t$. If there is $\{i_r,i_{r+1},\ldots,i_{s-1}\}$ with $1\leq i_a\leq a$ such that $T(L+\delta^{r,i_r}+\ldots+\delta^{s-1,i_{s-1}})$ satisfies  $G$, then 
by Step $1$, there exists $k$ such that $i_{k}\neq i_{s-1}$. Set $k_{0}$ to be the maximum of $\{k\ |\  i_{k}\neq i_{s-1}\}$.
Under this conditions \begin{itemize}
\item[(a)] $T(L+\delta^{k_0+1,i_{k_{0}+1}}+\ldots+\delta^{s-1,i_{s-1}})$ satisfies $G$, and by Step $1$, we have that $A_{(s,k_0+1)}(G)=-1$. 
\item[(b)] $T(L+\delta^{r,i_r}+\ldots+\delta^{k_0,i_{k_0}})$ satisfies  $G$, which by induction hypothesis implies that  $A_{(k_0+1,r)}(G)=-1$.
\end{itemize}
However, by Lemma \ref{cor: sum of arrows} (iii), $A_{(s,r)}(G)=1$, and $A_{(s,k_0+1)}(G)=-1$ necessarily implies $A_{(k_0+1,r)}(G)=1$, which contradicts (b).
\end{enumerate}
\end{proof}

\bibliographystyle{alpha}

\end{document}